\newtheorem{definition}{Definition}[section]
\newtheorem{proposition}[definition]{Proposition}
\newtheorem{corollary}[definition]{Corollary}
\newtheorem{remark}[definition]{Remark}
\newtheorem{theorem}[definition]{Theorem}
\newtheorem{example}[definition]{Example}
\def\rawo\lonra{\longrightarrow}
\def\ot{\otimes}
\newenvironment{proof}{{\it Proof.}}{\hfill $ \square $ \vskip 4mm}
\begin{document}
\title{Twisted algebras and 
Rota-Baxter type operators
\thanks{Research partially supported by FWO-Vlaanderen (Flemish Fund for Scientific Research) 
within the research project ''Equivariant Brauer groups and Galois deformations''.  
The first author (F. P.) was also partially supported by 
a grant of the Romanian National 
Authority for Scientific Research, CNCS-UEFISCDI, 
project number PN-II-ID-PCE-2011-3-0635,  
contract nr. 253/5.10.2011.}}
\author
{Florin Panaite\\
Institute of Mathematics of the 
Romanian Academy\\ 
PO-Box 1-764, RO-014700 Bucharest, Romania\\
e-mail: Florin.Panaite@imar.ro\\
\and 
Freddy Van Oystaeyen\\
Department of Mathematics and Computer Science\\
University of Antwerp, Middelheimlaan 1\\
B-2020 Antwerp, Belgium\\
e-mail: fred.vanoystaeyen@uantwerpen.be}
\date{}
\maketitle
\begin{abstract}
We define the concept of weak pseudotwistor for an algebra $(A, \mu )$ in a monoidal category 
$\mathcal{C}$, as a morphism $T:A\otimes A\rightarrow A\otimes A$ in $\mathcal{C}$, satisfying some axioms 
ensuring that $(A, \mu \circ T)$ is also an algebra in $\mathcal{C}$. This concept generalizes 
the previous proposal called pseudotwistor and covers a number of exemples of twisted algebras 
that cannot  
be covered by pseudotwistors, mainly examples provided by Rota-Baxter operators and some of their 
relatives (such as Leroux's TD-operators and Reynolds operators).  By using weak pseudotwistors, 
we introduce 
an equivalence relation (called ''twist equivalence'') for algebras in a given monoidal category. 
\end{abstract}
\section{Introduction}
\setcounter{equation}{0}
${\;\;\;}$The concept of pseudotwistor (with a particular case called twistor) was introduced in \cite{lpvo} 
as a general device for twisting (or deforming) the multiplication of an algebra in a monoidal 
category, obtaining thus a new algebra structure on the same object (informally, we call ''twisted algebra'' an 
algebra that can be obtained by deforming the multiplication of a given algebra, maybe with the help of 
some data on the initial algebra).
Namely, if $A$ is an algebra 
with multiplication $\mu :A\ot A\rightarrow A$ in a monoidal category $\mathcal{C}$, 
a pseudotwistor for $A$ is a morphism $T:A\ot A\rightarrow A\ot A$ in $\mathcal{C}$, such that there 
exist two morphisms $\tilde{T}_1, \tilde{T}_2:A\ot A\ot A\rightarrow A\ot A\ot A$ in $\mathcal{C}$, 
called the companions of $T$, satisfying some axioms ensuring that $(A, \mu \circ T)$ is also an algebra 
in $\mathcal{C}$. There are many classes of examples of such pseudotwistors. 
The one that was the starting point of \cite{lpvo} is provided by a twisted tensor product of algebras $A\ot _RB$ 
(as in \cite{cap}, \cite{vandaele}), which is a twisting by a twistor of the ordinary tensor product of 
algebras $A\ot B$. Another class of examples is provided by braidings: if $c$ is a braiding on a 
monoidal category $\mathcal{C}$, then $c^2_{A, A}$ is a pseudotwistor for every algebra $A$ in $\mathcal{C}$. 
The so-called Fedosov product provides another class of examples. Finally, if $H$ is a bialgebra 
and $\sigma :H\ot H\rightarrow k$ is a normalized and convolution invertible left 2-cocycle, 
one can consider the twisted algebra $_{\sigma }H$, which is the associative algebra structure 
on $H$ with multiplication $a*b=\sigma (a_1, b_1)a_2b_2$, for all $a, b\in H$; 
it turns out that this multiplication 
is afforded by the pseudotwistor 
\begin{eqnarray}
&&T:H\ot H\rightarrow H\ot H, \;\;\;T(a\ot b)=\sigma (a_1, b_1)a_2\ot b_2, \;\;\;\forall \;\;a, b\in H. 
\label{cucu}
\end{eqnarray}

An indication that a more general concept than pseudotwistors might exist is already implicit in 
two of the examples given above. For a twisted algebra of the type $_{\sigma }H$, the multiplication 
$*$ is associative even if $\sigma $ is not convolution invertible, but the map $T$ given by 
(\ref{cucu}) is no longer a pseudotwistor in this case. Also, if $c$ is only a pre-braiding on a monoidal 
category $\mathcal{C}$ (i.e. we do not assume the invertibility of the morphisms $c_{-, -}$)  
and $(A, \mu )$ is an algebra in $\mathcal{C}$, then $(A, \mu \circ c^2_{A, A})$ is still an algebra 
in the category but $c^2_{A, A}$ is no longer a pseudotwistor. 

However, we were led to a generalization of pseudotwistors by looking at another class of 
examples of twisted algebras, provided by Rota-Baxter operators. If $(A, \mu )$ is an associative algebra over 
a field $k$, with notation $\mu (a\ot b)=ab$, for $a, b\in A$, and $\lambda \in k$ is a fixed 
element, a linear map $R:A\rightarrow A$ is called a Rota-Baxter operator of 
weight $\lambda $ if it satisfies the relation
$R(a)R(b)=R(R(a)b+aR(b)+\lambda ab)$,  for all $a, b\in A$. 
It is well-known that the new multiplication $*_{\lambda }$ on $A$ defined by 
$a*_{\lambda }b=R(a)b+aR(b)+\lambda ab$ is associative. Also, it is by now well-known that 
Rota-Baxter operators represent a part of the algebraic component of the Connes-Kreimer approach 
to renormalization (see \cite{conneskreimer}, \cite{ladder}, \cite{kreimer} and references therein). 
If we define the linear map $T:A\ot A\rightarrow A\ot A$, 
$T(a\ot b)=R(a)\ot b+a\ot R(b)+\lambda a\ot b$, for all $a, b\in A$, 
then the associative multiplication 
$*_{\lambda }$ may be written as $*_{\lambda }=\mu \circ T$, but T is far from being a pseudotwistor. 

Motivated by all these examples, we introduce the following concept. Assume that $(A, \mu )$ is an 
algebra in a monoidal category $\mathcal{C}$, $T:A\otimes
A\rightarrow A\otimes A$ and 
$\mathcal{T}:A\otimes A\otimes A \rightarrow A\otimes A\otimes A$ are morphisms in 
$\mathcal{C}$ such that:
\begin{eqnarray*}
&&T\circ (id_A\otimes (\mu \circ T))=(id_A\otimes \mu )\circ \mathcal{T},\\
&&T\circ ((\mu \circ T)\otimes id_A)=(\mu \otimes id_A)\circ \mathcal{T}.
\end{eqnarray*}
Then $(A, \mu \circ T)$ is also an algebra in ${\mathcal{C}}$,
denoted by $A^T$; the morphism $T$ is called a weak 
pseudotwistor for $A$ and the morphism $\mathcal{T}$
is called the weak companion of $T$. It turns out that all the above-mentioned examples of 
deformed associative multiplications are afforded by weak pseudotwistors, and we provide as well 
some other examples, coming especially from Rota-Baxter type operators (Reynolds operators, 
Leroux's TD-operators etc). We present also some general properties of weak pseudotwistors. 

A new class of weak pseudotwistors, coming from so-called Rota-Baxter systems, may be found in the 
recent paper \cite{brzez}. In fact, as noted by the referee, some of the operators presented in Section 
\ref{section3} of our paper (including the TD-operators) are examples of Rota-Baxter systems, which 
gives an alternative way of proving that they yield weak pseudotwistors. 

In the last section we use weak pseudotwistors in order to introduce an equivalence relation for 
algebras in a monoidal category $\mathcal{C}$: if $A$ and $B$ are two such algebras, 
we say that $A$ and $B$ are 
twist equivalent (and write $A\equiv _tB$) if there exists an invertible weak pseudotwistor 
$T$ for $A$, with invertible weak companion $\mathcal{T}$, such that $A^T$ and $B$ are 
isomorphic as algebras. For example, if $A\ot _RB$ is a twisted tensor product of algebras with 
bijective twisting map $R$, then $A\ot _RB\equiv _tA\ot B$. 

Unless otherwise specified, the (co)algebras that will appear in this paper are {\em not} 
supposed to be (co)unital; if $A$ is an associative algebra over a field $k$ we usually denote 
the multiplication of $A$ by $\mu :A\ot A\rightarrow A$, $\mu (a\ot b)=ab$, for all $a, b\in A$. 
For the composition of two morphisms $f$ and $g$ we write either $g\circ f$ or simply $gf$. 
For unexplained terminology we refer to \cite{k}. 

\section{Weak pseudotwistors}
\setcounter{equation}{0}
${\;\;\;}$We recall the concept of pseudotwistor introduced in \cite{lpvo} 
(the version for nonunital algebras). 
\begin{definition} 
Let $(\mathcal{C}, \ot )$ be a strict monoidal category, $A$ an algebra in
$\mathcal{C}$ with multiplication $\mu :A\ot A\rightarrow A$ and  $T:A\otimes
A\rightarrow A\otimes A$ a morphism in $\mathcal{C}$. Assume that there exist two morphisms 
$\tilde{T}_1, \tilde{T}_2:A\otimes A\otimes A \rightarrow A\otimes A\otimes A$ in
$\mathcal{C}$ such that:
\begin{eqnarray*}
&&T\circ (id_A\otimes \mu )=(id_A\otimes \mu )\circ \tilde{T}_1\circ (T\otimes id_A),\\
&&T\circ (\mu \otimes id_A)=(\mu \otimes id_A)\circ \tilde{T}_2\circ (id_A\otimes T),\\
&&\tilde{T}_1\circ (T\otimes id_A)\circ (id_A\otimes T)=
\tilde{T}_2\circ (id_A\otimes T)\circ (T\otimes id_A). 
\end{eqnarray*}
Then $(A, \mu \circ T)$ is also an algebra in ${\mathcal{C}}$,
denoted by $A^T$. The morphism $T$ is called a
{\em pseudotwistor} and the two morphisms $\tilde{T}_1$,
$\tilde{T}_2$ are called the {\em companions} of $T$.
\end{definition}

We recall from \cite{psvo} the categorical analogue of the concept of $R$-matrix 
introduced in \cite{borcherds1}, \cite{borcherds2} (also the version for nonunital algebras).
\begin{proposition} 
Let $(\mathcal{C}, \ot )$ be a strict monoidal category, $A$ an algebra in 
$\cal C$ with multiplication $\mu :A\ot A\rightarrow A$ and $T:A\otimes A\rightarrow  
A\otimes A$ a morphism in $\cal C$. 
Assume that there exist two morphisms 
$\overline{T}_1, \overline{T}_2:A\otimes A\otimes A  
\rightarrow A\otimes A\otimes A$ in $\cal C$ such that:  
\begin{eqnarray*}
&&T\circ (id_A\otimes \mu )=(id_A\otimes \mu )\circ (T\otimes id_A)\circ \overline{T}_1, \\
&&T\circ (\mu \otimes id_A)=(\mu \otimes id_A)\circ (id_A\otimes T)\circ \overline{T}_2, \\
&&(T\otimes id_A)\circ \overline{T}_1\circ (id_A\otimes T)= 
(id_A\otimes T)\circ \overline{T}_2\circ (T\otimes id_A). 
\end{eqnarray*}
Then $(A, \mu \circ T)$ is also an algebra in $\mathcal{C}$,   
denoted by $A^T$. The morphism $T$ is called an {\sl $R$-matrix} and  
the two morphisms $\overline{T}_1$, $\overline{T}_2$ are called the 
{\sl companions} of $T$. 
\end{proposition}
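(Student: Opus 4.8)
The plan is to verify directly that the multiplication $\mu \circ T$ is associative, using the two displayed axioms for the $R$-matrix $T$ together with the associativity of $\mu$. First I would write down the two expressions to be compared, namely $(\mu \circ T)\circ ((\mu \circ T)\otimes id_A)$ and $(\mu \circ T)\circ (id_A\otimes (\mu \circ T))$, both as morphisms $A\otimes A\otimes A\to A$. The first step is to rewrite each of these using the appropriate compatibility axiom: in $(\mu \circ T)\circ ((\mu \circ T)\otimes id_A)$ the inner occurrence of $T\circ (\mu \otimes id_A)$ is replaced, via the second axiom, by $(\mu \otimes id_A)\circ (id_A\otimes T)\circ \overline{T}_2$; symmetrically, in $(\mu \circ T)\circ (id_A\otimes (\mu \circ T))$ the inner $T\circ (id_A\otimes \mu)$ is replaced, via the first axiom, by $(id_A\otimes \mu)\circ (T\otimes id_A)\circ \overline{T}_1$.

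Next I would push the outer $T$ through, again using the same two axioms: after the first substitution one is left with an outer $T\circ (\mu \otimes id_A)$ (acting on the first two tensor factors), which again rewrites as $(\mu \otimes id_A)\circ (id_A\otimes T)\circ \overline{T}_2$; and after the second substitution one is left with an outer $T\circ (id_A\otimes \mu)$, which rewrites as $(id_A\otimes \mu)\circ (T\otimes id_A)\circ \overline{T}_1$. At this point both sides have been reduced to expressions of the form (two copies of $\mu$ applied appropriately) composed with $(id_A\otimes T)\circ \overline{T}_2\circ (T\otimes id_A)$ on one side and $(T\otimes id_A)\circ \overline{T}_1\circ (id_A\otimes T)$ on the other. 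The third axiom says precisely that these two composites of morphisms $A^{\otimes 3}\to A^{\otimes 3}$ coincide, so the two sides agree after composing with the remaining maps built from $\mu$; associativity of $\mu$ (used to identify the two different bracketings of the triple product that survive) then finishes the argument.

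The main obstacle — really the only delicate point — is bookkeeping: one must be careful about which tensor factors each intermediate morphism acts on, and in the nonunital setting one cannot simplify by inserting units, so the identities $id_A\otimes \mu$, $\mu \otimes id_A$, $T\otimes id_A$, $id_A\otimes T$ must be tracked through morphisms of $A^{\otimes 4}\to A^{\otimes \le 3}$ with some care. It is cleanest to organize the computation so that after each rewrite one isolates exactly an occurrence of $T\circ(\mu\otimes id_A)$ or $T\circ(id_A\otimes\mu)$ to which an axiom applies, rather than expanding everything at once. Since $\mathcal{C}$ is strict monoidal, no associativity or unit constraints intervene, and the whole verification is a finite diagram chase; the statement is exactly the nonunital categorical version of the $R$-matrix construction of \cite{borcherds1,borcherds2,psvo}, so I expect the argument given by the authors to be this same diagram chase, possibly rendered in string-diagram form.
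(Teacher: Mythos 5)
Your overall strategy is correct and is essentially the proof underlying the paper: there the proposition is recalled from \cite{psvo}, and within this paper it is subsumed by Theorem \ref{main} together with Remark \ref{echival}, since an $R$-matrix is a weak pseudotwistor with weak companion $\mathcal{T}=(T\otimes id_A)\circ \overline{T}_1\circ (id_A\otimes T)=(id_A\otimes T)\circ \overline{T}_2\circ (T\otimes id_A)$; unwinding that is exactly the diagram chase you describe, namely rewrite $\mu\circ T\circ(\mu\otimes id_A)\circ(T\otimes id_A)$ by the second axiom, rewrite $\mu\circ T\circ(id_A\otimes\mu)\circ(id_A\otimes T)$ by the first, and conclude with the third axiom and the associativity $\mu\circ(\mu\otimes id_A)=\mu\circ(id_A\otimes\mu)$.

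One slip in your middle paragraph: the announced ``second pass'' does not exist. Each side of the associativity identity contains exactly one composite of the form $T\circ(\mu\otimes id_A)$ (resp.\ $T\circ(id_A\otimes\mu)$), involving the outer $T$; after that single substitution the remaining occurrences of $T$ appear only as $T\otimes id_A$ and $id_A\otimes T$, so there is nothing further to ``push through,'' and a literal second application would insert an extra copy of $\overline{T}_2$ (resp.\ $\overline{T}_1$) and would not yield the expressions you then write down. Your stated final forms, $\mu\circ(\mu\otimes id_A)\circ(id_A\otimes T)\circ\overline{T}_2\circ(T\otimes id_A)$ and $\mu\circ(id_A\otimes\mu)\circ(T\otimes id_A)\circ\overline{T}_1\circ(id_A\otimes T)$, are precisely the correct one-substitution outcomes, so once the redundant step is deleted the argument is complete and agrees with the paper's.
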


We introduce now a common generalization of these two concepts. 
\begin{theorem} \label{main}
Let $(\mathcal{C}, \ot )$ be a strict monoidal category, $A$ an algebra in
$\mathcal{C}$ with multiplication $\mu :A\ot A\rightarrow A$ and  $T:A\otimes
A\rightarrow A\otimes A$ a morphism in $\mathcal{C}$. Assume that there exists a morphism
$\mathcal{T}:A\otimes A\otimes A \rightarrow A\otimes A\otimes A$ in
$\mathcal{C}$ such that:
\begin{eqnarray}
&&T\circ (id_A\otimes (\mu \circ T))=(id_A\otimes \mu )\circ \mathcal{T},
\label{wp1} \\
&&T\circ ((\mu \circ T)\otimes id_A)=(\mu \otimes id_A)\circ \mathcal{T}.
\label{wp2} 
\end{eqnarray}
Then $(A, \mu \circ T)$ is also an algebra in ${\mathcal{C}}$,
denoted by $A^T$. The morphism $T$ is called a {\em weak 
pseudotwistor} and the morphism $\mathcal{T}$
is called the {\em weak companion} of $T$.
\end{theorem}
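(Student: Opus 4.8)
The plan is to verify directly that the multiplication $\mu \circ T : A \ot A \to A$ is associative, i.e.\ that
\[
(\mu \circ T) \circ ((\mu \circ T) \ot id_A) = (\mu \circ T) \circ (id_A \ot (\mu \circ T)).
\]
First I would rewrite the left-hand side by isolating the inner occurrence of $T$: since $(\mu \circ T) \ot id_A = (\mu \ot id_A) \circ (T \ot id_A)$ is not quite what appears, the right grouping to exploit is that $(\mu \circ T) \circ ((\mu \circ T)\ot id_A)$ contains the subexpression $T \circ ((\mu \circ T) \ot id_A)$, which by \eqref{wp2} equals $(\mu \ot id_A) \circ \mathcal{T}$. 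So the left-hand side becomes $\mu \circ (\mu \ot id_A) \circ \mathcal{T}$. Symmetrically, the right-hand side $(\mu \circ T) \circ (id_A \ot (\mu \circ T))$ contains $T \circ (id_A \ot (\mu \circ T))$, which by \eqref{wp1} equals $(id_A \ot \mu) \circ \mathcal{T}$, so the right-hand side becomes $\mu \circ (id_A \ot \mu) \circ \mathcal{T}$.

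At this point both sides have been reduced to $\mu$ applied (in the two possible ways) to a length-three product, all precomposed with the \emph{same} morphism $\mathcal{T}: A \ot A \ot A \to A \ot A \ot A$. The proof then closes by invoking the associativity of the original multiplication $\mu$ on $A$: since $\mu \circ (\mu \ot id_A) = \mu \circ (id_A \ot \mu)$ as morphisms $A \ot A \ot A \to A$, composing both with $\mathcal{T}$ on the right gives equality of the two expressions, hence associativity of $\mu \circ T$.

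The only real care needed is bookkeeping with the tensor placements and the interchange law in the strict monoidal category — for instance, recognizing that $(\mu \circ T) \ot id_A = (id_A^{\ot 0}\text{-free rewriting})$ lets us write $((\mu\circ T)\ot id_A)$ so that the composite $T \circ ((\mu\circ T)\ot id_A)$ literally matches the left-hand side of \eqref{wp2}, and similarly on the other side. Strictness means we may drop all associativity and unit constraints, so these manipulations are purely formal. I do not expect any genuine obstacle: the two axioms \eqref{wp1}–\eqref{wp2} were evidently reverse-engineered precisely so that the outer $\mu$ converts them into the two sides of the associativity constraint for $\mu$, and the shared weak companion $\mathcal{T}$ is exactly the device that makes the two reductions land on a common form. (Note that, as in the pseudotwistor and $R$-matrix cases, no invertibility of $T$ or $\mathcal{T}$ is used, and unitality plays no role.)
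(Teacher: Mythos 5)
Your proposal is correct and follows essentially the same route as the paper's proof: apply \eqref{wp2} inside $(\mu\circ T)\circ((\mu\circ T)\ot id_A)$ to get $\mu\circ(\mu\ot id_A)\circ\mathcal{T}$, apply \eqref{wp1} on the other side to get $\mu\circ(id_A\ot\mu)\circ\mathcal{T}$, and conclude by the associativity of $\mu$. No gaps; the bookkeeping remarks are harmless and the argument matches the paper's computation step for step.
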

\begin{proof} We compute:
\begin{eqnarray*}
(\mu \circ T)\circ ((\mu \circ T)\ot id_A)&=&\mu \circ T\circ ((\mu \circ T)\ot id_A)\\
&\overset{(\ref{wp2})}{=}&\mu \circ (\mu \ot id_A)\circ \mathcal{T}\\
&=&\mu \circ (id_A\ot \mu )\circ \mathcal{T}\\
&\overset{(\ref{wp1})}{=}&(\mu \circ T)\circ (id_A\ot (\mu \circ T)),
\end{eqnarray*}
finishing the proof.
\end{proof}
\begin{remark} \label{echival}
If $T$ is a pseudotwistor with companions $\tilde{T}_1$,
$\tilde{T}_2$ on an algebra $A$, then $T$ is also a weak pseudotwistor, with weak companion 
$\mathcal{T}=\tilde{T}_1\circ (T\otimes id_A)\circ (id_A\otimes T)=
\tilde{T}_2\circ (id_A\otimes T)\circ (T\otimes id_A)$.

Conversely, an {\em invertible} weak pseudotwistor $T$ with weak companion $\mathcal{T}$ on an 
algebra $A$ is a pseudotwistor, with companions 
$\tilde{T}_1=\mathcal{T}\circ (id_A\otimes T^{-1})\circ (T^{-1}\otimes id_A)$ and 
$\tilde{T}_2=\mathcal{T}\circ (T^{-1}\otimes id_A)\circ (id_A\otimes T^{-1})$.

If $T$ is an $R$-matrix with companions $\overline{T}_1$, $\overline{T}_2$ on an algebra $A$, 
then $T$ is a weak pseudotwistor, with weak companion
$\mathcal{T}=(T\otimes id_A)\circ \overline{T}_1\circ (id_A\otimes T)=
(id_A\otimes T)\circ \overline{T}_2\circ (T\otimes id_A)$.

Conversely, an {\em invertible} weak pseudotwistor $T$ with weak companion $\mathcal{T}$ on an 
algebra $A$ is an $R$-matrix, with companions 
$\overline{T}_1=(T^{-1}\otimes id_A)\circ \mathcal{T}\circ (id_A\otimes T^{-1})$ and 
$\overline{T}_2=(id_A\otimes T^{-1})\circ \mathcal{T}\circ (T^{-1}\otimes id_A)$.
\end{remark}
\begin{example}
Let $A$ be an associative unital algebra with unit $1_A$ over a field $k$. In \cite{lebed} 
the following linear map was considered:
\begin{eqnarray*}
&&T:A\ot A\rightarrow A\ot A, \;\;\;T(a\ot b)=1_A\ot ab, \;\;\;\forall \;a, b\in A. 
\end{eqnarray*}
The associativity of the multiplication of $A$ is equivalent to the fact that this map $T$ is a 
Yang-Baxter operator, cf. \cite{lebed}. 

One can check, by a direct computation, that $T$ is a pseudotwistor (in particular, a weak pseudotwistor)  
with companions $\tilde{T}_1=id_A\ot T$ and $\tilde{T}_2(a\ot b\ot c)=1_A\ot 1_A\ot abc$, 
for all $a, b, c\in A$, and obviously $A^T=A$. 
\end{example}
\begin{remark}
It is possible to have an associative algebra $A$ over a field $k$, a linear map $T:A\ot A\rightarrow A\ot A$ 
that is {\em not} a weak pseudotwistor but such that $\mu \circ T$ is associative. Indeed, for a 
given associative algebra $A$, the linear map $T:A\ot A\rightarrow A\ot A$, $T(a\ot b)=b\ot a$, 
for all $a, b\in A$, has the property that $\mu \circ T$ is associative (of course, it is just the 
multiplication of $A^{op}$) but, in general, $T$ is not a weak pseudotwistor (although it may be, for some 
particular algebras $A$). We give a concrete example of an associative algebra $A$ for which $T$ 
is not a weak pseudotwistor. We take $A$ to be a $2$-dimensional associative algebra over a field $k$ 
with a linear basis $\{x, y\}$ with multiplication table $x\cdot x=x\cdot y=y\cdot x=y\cdot y=x$. 
We claim that there exists no linear map $\mathcal{T}:A\otimes A\otimes A \rightarrow A\otimes A\otimes A$ 
such that $T\circ (id_A\otimes (\mu \circ T))(y\ot x\ot x)=(id_A\otimes \mu )\circ \mathcal{T}(y\ot x\ot x)$. 
Suppose that such a map exists. We have $T\circ (id_A\otimes (\mu \circ T))(y\ot x\ot x)=x\ot y$. 
If we write $\mathcal{T}(y\ot x\ot x)=a_1x\ot x\ot x+a_2x\ot x\ot y+a_3x\ot y\ot x+a_4x\ot y\ot y
+a_5y\ot x\ot x+a_6y\ot x\ot y+a_7y\ot y\ot x+a_8y\ot y\ot y$, 
with $a_1, ... , a_8\in k$, then we have $(id_A\otimes \mu )\circ \mathcal{T}(y\ot x\ot x)=
(a_1+a_2+a_3+a_4)x\ot x+(a_5+a_6+a_7+a_8)y\ot x$, and this element cannot be equal to $x\ot y$. 
\end{remark}

Let $(\mathcal{C}, \ot )$ be a strict monoidal category and $T_{X, Y}:X\ot Y\rightarrow X\ot Y$ a family 
of natural morphisms in $\mathcal{C}$ such that, for all $X, Y, Z\in \mathcal{C}$, we have:
\begin{eqnarray}
&&T_{X\ot Y, Z}\circ (T_{X, Y}\ot id_Z)=T_{X, Y\ot Z}\circ (id_X\ot T_{Y, Z}). \label{laycle}
\end{eqnarray}
It was proved in \cite{psvo} that if for all $X, Y\in \mathcal{C}$ the morphism $T_{X, Y}$ is an 
isomorphism, then, for every algebra $(A, \mu )$ in $\mathcal{C}$, the morphism 
$T_{A, A}:A\ot A\rightarrow A\ot A$ is a pseudotwistor. If we do not assume the invertibility of the 
morphisms $T_{X, Y}$, then $T_{A, A}$ is no longer a pseudotwistor. 
\begin{proposition}\label{weaklaycle}
$T_{A, A}$ is a weak pseudotwistor. 
\end{proposition}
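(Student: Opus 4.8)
The plan is to produce the weak companion explicitly from the naturality of the family $T_{-,-}$, and then to let the hexagon identity (\ref{laycle}) do the only real work. Since the multiplication $\mu \colon A\ot A\ra A$ is itself a morphism in $\mathcal{C}$, naturality of $T_{-,-}$ in each of its two arguments yields
\begin{eqnarray*}
&&T_{A,A}\circ (id_A\ot \mu )=(id_A\ot \mu )\circ T_{A,\,A\ot A},\\
&&T_{A,A}\circ (\mu \ot id_A)=(\mu \ot id_A)\circ T_{A\ot A,\,A}.
\end{eqnarray*}
This already suggests the candidate
\begin{eqnarray*}
\mathcal{T}:=T_{A\ot A,\,A}\circ (T_{A,A}\ot id_A)=T_{A,\,A\ot A}\circ (id_A\ot T_{A,A}),
\end{eqnarray*}
where the two displayed expressions coincide precisely by (\ref{laycle}) applied with $X=Y=Z=A$.

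With $\mathcal{T}$ so defined, I would simply check the two axioms (\ref{wp1}) and (\ref{wp2}) of \thref{main}. For (\ref{wp1}) I would start from $T_{A,A}\circ (id_A\ot (\mu \circ T_{A,A}))$, rewrite it as $T_{A,A}\circ (id_A\ot \mu )\circ (id_A\ot T_{A,A})$, apply the first naturality identity above to pull $id_A\ot\mu$ to the outside, obtaining $(id_A\ot \mu )\circ T_{A,\,A\ot A}\circ (id_A\ot T_{A,A})$, and recognise the right-hand factor as $\mathcal{T}$. For (\ref{wp2}) the argument is the mirror image: $T_{A,A}\circ ((\mu \circ T_{A,A})\ot id_A)=T_{A,A}\circ (\mu \ot id_A)\circ (T_{A,A}\ot id_A)=(\mu \ot id_A)\circ T_{A\ot A,\,A}\circ (T_{A,A}\ot id_A)=(\mu \ot id_A)\circ \mathcal{T}$, using the second naturality identity and the other description of $\mathcal{T}$.

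I do not expect a genuine obstacle here: the computation is two lines once the candidate is in hand. The only point that requires care — and the reason invertibility of the $T_{X,Y}$ is not needed — is that each axiom naturally forces a \emph{different} formula for $\mathcal{T}$ (one built from the "left" slot, one from the "right" slot), and it is exactly the compatibility condition (\ref{laycle}) that shows these two formulas agree, so that a single morphism $\mathcal{T}$ works for both. In the invertible case of \cite{psvo} one could instead cancel copies of $T$ and recover the stronger pseudotwistor axioms, but for the weak version this reconciliation step is all that is required.
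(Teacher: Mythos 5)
Your proof is correct and is essentially identical to the paper's: you define $\mathcal{T}=T_{A\ot A,A}\circ (T_{A,A}\ot id_A)=T_{A,A\ot A}\circ (id_A\ot T_{A,A})$ (the two expressions agreeing by (\ref{laycle})), use naturality of $T_{-,-}$ with respect to $\mu$ to get the two displayed identities, and then verify (\ref{wp1}) and (\ref{wp2}) exactly as in the paper. No further comment is needed.
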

\begin{proof}
The identity (\ref{laycle}) for $X=Y=Z=A$ becomes 
$T_{A\ot A, A}\circ (T_{A, A}\ot id_A)=T_{A, A\ot A}\circ (id_A\ot T_{A, A})$;  
we denote by $\mathcal{T}$ this morphism from $A\ot A\ot A$ to $A\ot A\ot A$. The naturality of $T$ 
implies that 
$T_{A, A}\circ (id_A\ot \mu )=(id_A\ot \mu )\circ T_{A, A\ot A}$ and 
$T_{A, A}\circ (\mu \ot id_A)=(\mu \ot id_A)\circ T_{A\ot A, A}$, 
hence:
\begin{eqnarray*}
&&T_{A, A}\circ (id_A\ot (\mu \circ T_{A, A}))=(id_A\ot \mu )\circ T_{A, A\ot A}\circ (id_A\ot T_{A, A})
=(id_A\ot \mu )\circ \mathcal{T}, \\
&&T_{A, A}\circ ((\mu \circ T_{A, A})\ot id_A)=(\mu \ot id_A)\circ T_{A\ot A, A}\circ (T_{A, A}\ot id_A)=
(\mu \ot id_A)\circ \mathcal{T}, 
\end{eqnarray*}
proving that $T_{A, A}$ is a weak pseudotwistor with weak companion $\mathcal{T}$. 
\end{proof}
\begin{definition}(\cite{cap}, \cite{vandaele}) Let $(\mathcal{C}, \ot )$ be a strict monoidal category and 
$(A, \mu _A)$, $(B, \mu _B)$ 
two algebras in $\mathcal{C}$. A {\em twisting map} between $A$ and $B$ is a morphism $R:B\ot A 
\rightarrow A\ot B$ in $\mathcal{C}$ satisfying the conditions: 
\begin{eqnarray}
&&R\circ (id_B\ot \mu _A)=(\mu _A\ot id_B)\circ (id_A\ot R)\circ (R\ot id_A),
\label{twm1}\\
&&R\circ (\mu _B\ot id_A)=(id_A\ot \mu _B)\circ (R\ot id_B)\circ (id_B\ot R).
\label{twm2}
\end{eqnarray}
\end{definition}

The next result generalizes \cite{lpvo}, Theorem 6.6 (iii). 
\begin{proposition}\label{2twmaps}
Let $(\mathcal{C}, \ot )$ be a strict monoidal category and 
$(A, \mu )$ an algebra in $\mathcal{C}$. Assume that $Q, P:A\ot A\rightarrow A\ot A$ are 
two twisting maps between $A$ and itself, such that:
\begin{eqnarray}
&&(P\ot id_A)\circ (id_A\ot P)\circ (P\ot id_A)=(id_A\ot P)\circ (P\ot id_A)\circ (id_A\ot P), \label{(6.10)}\\
&&(Q\ot id_A)\circ (id_A\ot Q)\circ (Q\ot id_A)=(id_A\ot Q)\circ (Q\ot id_A)\circ (id_A\ot Q), \label{(6.11)}\\
&&(P\ot id_A)\circ (id_A\ot P)\circ (Q\ot id_A)=(id_A\ot Q)\circ (P\ot id_A)\circ (id_A\ot P), \label{(6.12)}\\
&&(Q\ot id_A)\circ (id_A\ot P)\circ (P\ot id_A)=(id_A\ot P)\circ (P\ot id_A)\circ (id_A\ot Q). \label{(6.13)}
\end{eqnarray}
Then $T:=Q\circ P:A\ot A\rightarrow A\ot A$ is a weak pseudotwistor. 
\end{proposition}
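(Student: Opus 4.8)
The goal is to produce a morphism $\mathcal{T}:A\ot A\ot A\rightarrow A\ot A\ot A$ satisfying the two defining identities (\ref{wp1}) and (\ref{wp2}) for $T=Q\circ P$. Since $P$ and $Q$ are twisting maps between $A$ and itself, each of them alone produces a pseudotwistor (this is the content of the Čap--Schichl--Vanžura / Van Daele--Van Keer construction recalled in \cite{lpvo}): the twisted tensor product $A\ot_PA$ has multiplication $\mu_P=(\mu\ot\mu)\circ(id_A\ot P\ot id_A)$, but here we only need the weaker statement that $\mu\circ P$ and $\mu\circ Q$ are again associative multiplications on $A$, with pseudotwistor companions built from $id_A\ot P$, $P\ot id_A$ (respectively $id_A\ot Q$, $Q\ot id_A$). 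The natural candidate for the weak companion of $T=Q\circ P$ is therefore the composite
\[
\mathcal{T}=(Q\ot id_A)\circ(id_A\ot Q)\circ(P\ot id_A)\circ(id_A\ot P),
\]
or possibly the left-right mirror of it; the first thing I would do is write down both candidates and check, using (\ref{(6.12)}) and (\ref{(6.13)}), that they agree.

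\textbf{Verification of (\ref{wp1}).} I would start from the right-hand side $(id_A\ot\mu)\circ\mathcal{T}$ and push the $\mu$ leftward through the four braid-type factors of $\mathcal{T}$, using at each stage the twisting-map axioms (\ref{twm1})--(\ref{twm2}) for $P$ and $Q$ (which say precisely how $R\circ(id\ot\mu)$ and $R\circ(\mu\ot id)$ factor through two copies of $R$). Concretely: the leftmost $(id_A\ot P)$ meets $\mu$ acting on the last two tensor legs; pulling $\mu$ through it via (the appropriate instance of) the twisting-map axiom for $P$ converts $(id_A\ot\mu)\circ(id_A\ot P)$ into something of the form $(id_A\ot P)\circ(\text{stuff on legs }2,3)\circ(\text{stuff})$, and one continues. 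The target is to arrive at $(Q\circ P)\circ(id_A\ot(\mu\circ(Q\circ P)))$, i.e.\ at the left-hand side of (\ref{wp1}). The identity (\ref{wp2}) is handled symmetrically, pulling $\mu$ through on the first two legs. The hexagon-type relations (\ref{(6.10)})--(\ref{(6.13)}) are exactly what is needed to reorder the intermediate braid factors so that the $P$'s and $Q$'s line up in the pattern $Q\circ P$ demanded by $T$ on the outside.

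\textbf{Main obstacle.} The bookkeeping is the whole difficulty: one is manipulating length-three words in the letters $P\ot id_A$, $id_A\ot P$, $Q\ot id_A$, $id_A\ot Q$ together with $\mu$ inserted on two adjacent legs, and the four braid relations (\ref{(6.10)})--(\ref{(6.13)}) each get used to commute one specific adjacent pair past another. I expect the cleanest route is to prove it diagrammatically (string diagrams in $\mathcal{C}$), where each twisting-map axiom is a local move and each of (\ref{(6.10)})--(\ref{(6.13)}) is a Yang--Baxter-type slide; then (\ref{wp1}) and (\ref{wp2}) each reduce to a finite sequence of such moves. An alternative, noted in the discussion after the statement of weak pseudotwistors in the paper, is to observe that a pair of twisting maps satisfying (\ref{(6.10)})--(\ref{(6.13)}) assembles into a \emph{Rota--Baxter system} in the sense of \cite{brzez}, and to quote the fact that Rota--Baxter systems yield weak pseudotwistors; but giving the direct computational proof is more self-contained. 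The genuinely delicate point to get right is the choice of $\mathcal{T}$: one must check that the ``$Q$-block then $P$-block'' composite and its mirror really do coincide under (\ref{(6.12)})--(\ref{(6.13)}), since the two defining identities (\ref{wp1}) and (\ref{wp2}) are not symmetric in an obvious way and a wrong choice of $\mathcal{T}$ will satisfy one but not the other.
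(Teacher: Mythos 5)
Your overall strategy (pull $\mu$ through the twisting-map axioms (\ref{twm1})--(\ref{twm2}) and use (\ref{(6.10)})--(\ref{(6.13)}) to reorder the leg factors) is the right general idea and is how the paper proceeds, but your concrete choice of weak companion is wrong, and this is precisely the ``delicate point'' you flag. The candidate $\mathcal{T}=(Q\ot id_A)\circ (id_A\ot Q)\circ (P\ot id_A)\circ (id_A\ot P)$, and likewise its mirror, does not satisfy (\ref{wp1}). A minimal counterexample: take $P=Q=\tau$ the flip map on a noncommutative algebra $A$ (the flip is a twisting map from $A$ to itself and satisfies (\ref{(6.10)})--(\ref{(6.13)}), all four being the braid relation). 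Then $T=Q\circ P=id_{A\ot A}$, so the left-hand side of (\ref{wp1}) is $id_A\ot \mu$, while your $\mathcal{T}$ is the cyclic permutation $a\ot b\ot c\mapsto b\ot c\ot a$, giving $(id_A\ot \mu)\circ \mathcal{T}(a\ot b\ot c)=b\ot ca\neq a\ot bc$ in general. (The same example also shows that your two four-factor candidates do not coincide under (\ref{(6.12)})--(\ref{(6.13)}): they are the two distinct $3$-cycles.) A simple count explains why four factors cannot suffice: in $T\circ (id_A\ot (\mu\circ T))$ the inner $T$ contributes the two factors $(id_A\ot Q)\circ (id_A\ot P)$ untouched, and pulling $\mu$ through the outer $T=Q\circ P$ via (\ref{twm1}) and (\ref{twm2}) produces two factors for $P$ and two for $Q$, so the companion is necessarily a word of length six in the leg maps.

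The correct companion, which makes your plan go through, is
\begin{eqnarray*}
\mathcal{T}&=&(Q\ot id_A)\circ (id_A\ot Q)\circ (Q\ot id_A)\circ (id_A\ot P)\circ (P\ot id_A)\circ (id_A\ot P)\\
&=&(id_A\ot Q)\circ (Q\ot id_A)\circ (id_A\ot Q)\circ (P\ot id_A)\circ (id_A\ot P)\circ (P\ot id_A),
\end{eqnarray*}
the two expressions being equal by (\ref{(6.10)}) and (\ref{(6.11)}). With this $\mathcal{T}$, identity (\ref{wp1}) follows by applying (\ref{twm1}) for $P$, then (\ref{(6.13)}) to move $(id_A\ot Q)$ past the $P$-block, then (\ref{twm2}) for $Q$; identity (\ref{wp2}) is the mirror computation using (\ref{twm2}) for $P$, then (\ref{(6.12)}), then (\ref{twm1}) for $Q$. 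As written, your proposal would stall: with the four-factor $\mathcal{T}$ the two sides of (\ref{wp1}) simply are not equal, so no amount of bookkeeping or diagrammatics closes the argument. (Your side remark about Rota--Baxter systems does not rescue this either, since the assertion to be proved here concerns twisting maps, and one would still have to exhibit a valid companion.)
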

\begin{proof}
Because of (\ref{(6.10)}) and (\ref{(6.11)}), we have the following equality: 
\begin{eqnarray*}
&&(Q\ot id_A)\circ (id_A\ot Q)\circ (Q\ot id_A)\circ (id_A\ot P)\circ (P\ot id_A)\circ (id_A\ot P)\\
&&\;\;\;\;\;\;\;\;\;\;\;\;\;\;\;\;=(id_A\ot Q)\circ (Q\ot id_A)\circ (id_A\ot Q)\circ 
(P\ot id_A)\circ (id_A\ot P)\circ (P\ot id_A). 
\end{eqnarray*}
This morphism from $A\ot A\ot A$ to $A\ot A\ot A$ will be denoted by $\mathcal{T}$. Now we compute:
\begin{eqnarray*}
T\circ (id_A\ot (\mu \circ T))&=&Q\circ P\circ (id_A\ot \mu )\circ (id_A\ot Q)\circ (id_A\ot P)\\
&\overset{(\ref{twm1})}{=}&Q\circ (\mu \ot id_A)\circ (id_A\ot P)\circ (P\ot id_A)\circ (id_A\ot Q)
\circ (id_A\ot P)\\
&\overset{(\ref{(6.13)})}{=}&Q\circ (\mu \ot id_A)\circ (Q\ot id_A)\circ (id_A\ot P)\circ (P\ot id_A)
\circ (id_A\ot P)\\
&\overset{(\ref{twm2})}{=}&(id_A\ot \mu )\circ (Q\ot id_A)\circ (id_A\ot Q)\circ (Q\ot id_A)\circ 
(id_A\ot P)\\
&&\circ (P\ot id_A)\circ (id_A\ot P)\\
&=&(id_A\ot \mu )\circ \mathcal{T}, 
\end{eqnarray*}
\begin{eqnarray*}
T\circ ((\mu \circ T)\ot id_A)&=&Q\circ P\circ (\mu \ot id_A)\circ (Q\ot id_A)\circ (P\ot id_A)\\
&\overset{(\ref{twm2})}{=}&Q\circ (id_A\ot \mu )\circ (P\ot id_A)\circ (id_A\ot P)\circ (Q\ot id_A)
\circ (P\ot id_A)\\
&\overset{(\ref{(6.12)})}{=}&Q\circ (id_A\ot \mu )\circ (id_A\ot Q)\circ (P\ot id_A)\circ (id_A\ot P)
\circ (P\ot id_A)\\
&\overset{(\ref{twm1})}{=}&(\mu \ot id_A)\circ (id_A\ot Q)\circ (Q\ot id_A)\circ (id_A\ot Q)\circ 
(P\ot id_A)\\
&&\circ (id_A\ot P)\circ (P\ot id_A)\\
&=&(\mu \ot id_A)\circ \mathcal{T}, 
\end{eqnarray*}
so $T$ is a weak pseudotwistor with weak companion $\mathcal{T}$. 
\end{proof}
\begin{corollary}
Let $(\mathcal{C}, \ot )$ be a strict monoidal category and $c_{X, Y}:X\ot Y\rightarrow Y\ot X$ a 
pre-braiding on $\mathcal{C}$ (that is $c$ satisfies all the axioms of a braiding as in \cite{k} except 
for the fact that we do not require $c_{X, Y}$ to be invertible). Let $(A, \mu )$ be an algebra in 
$\mathcal{C}$. Then $c^2_{A, A}=c_{A, A}\circ c_{A, A}:A\ot A\rightarrow A\ot A$ is a 
weak pseudotwistor.
\end{corollary}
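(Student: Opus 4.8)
The plan is to deduce this from \prref{2twmaps} by exhibiting $c_{A,A}$ as a single twisting map $R:A\ot A\to A\ot A$ and setting $P=Q=c_{A,A}$, so that $T=Q\circ P=c^2_{A,A}$. First I would recall that for a pre-braiding, the hexagon axioms say precisely that $c_{A,A}\circ(id_A\ot\mu)=(\mu\ot id_A)\circ(id_A\ot c_{A,A})\circ(c_{A,A}\ot id_A)$ and $c_{A,A}\circ(\mu\ot id_A)=(id_A\ot\mu)\circ(c_{A,A}\ot id_A)\circ(id_A\ot c_{A,A})$ — these are exactly conditions \eqref{twm1} and \eqref{twm2} with $R=c_{A,A}$ and $A=B$, so $c_{A,A}$ is a twisting map between $A$ and itself.

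Next I would check the four braid-type identities \eqref{(6.10)}--\eqref{(6.13)}. With $P=Q=c_{A,A}$, all four collapse to the single Yang--Baxter (braid) equation for $c$:
\begin{eqnarray*}
&&(c_{A,A}\ot id_A)\circ(id_A\ot c_{A,A})\circ(c_{A,A}\ot id_A)\\
&&\;\;\;\;\;\;\;\;=(id_A\ot c_{A,A})\circ(c_{A,A}\ot id_A)\circ(id_A\ot c_{A,A}),
\end{eqnarray*}
which is a standard consequence of the naturality of $c$ together with the hexagon axioms (equivalently, it is built into the definition of a (pre-)braiding). Hence \eqref{(6.10)}--\eqref{(6.13)} all hold, \prref{2twmaps} applies, and $c^2_{A,A}$ is a weak pseudotwistor with weak companion
\[
\mathcal{T}=(c_{A,A}\ot id_A)\circ(id_A\ot c_{A,A})\circ(c_{A,A}\ot id_A)\circ(id_A\ot c_{A,A})\circ(c_{A,A}\ot id_A)\circ(id_A\ot c_{A,A}),
\]
i.e.\ $\mathcal{T}=c^3$ in the braid-group notation on three strands.

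Alternatively, and perhaps more cleanly, one can bypass \prref{2twmaps} and argue directly via \prref{weaklaycle}: the family $T_{X,Y}:=c_{Y,X}\circ c_{X,Y}:X\ot Y\to X\ot Y$ is natural, and the hexagon/braid relations for $c$ give exactly the compatibility \eqref{laycle}, $T_{X\ot Y,Z}\circ(T_{X,Y}\ot id_Z)=T_{X,Y\ot Z}\circ(id_X\ot T_{Y,Z})$; then \prref{weaklaycle} immediately yields that $T_{A,A}=c^2_{A,A}$ is a weak pseudotwistor. I expect the only real point requiring care to be the verification that \eqref{(6.10)}--\eqref{(6.13)} (respectively \eqref{laycle}) genuinely follow from the pre-braiding axioms without invoking invertibility of $c$ — but since the braid equation and the hexagon identities hold for any pre-braiding by definition and naturality, this is routine diagram-chasing rather than a genuine obstacle.
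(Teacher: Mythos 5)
Your proposal is correct and takes essentially the same route as the paper, whose proof consists precisely of the two alternatives you give: apply Proposition \ref{2twmaps} with $P=Q=c_{A,A}$ (the hexagon axioms together with naturality making $c_{A,A}$ a twisting map and conditions (\ref{(6.10)})--(\ref{(6.13)}) reducing to the braid relation, none of which needs invertibility), or apply Proposition \ref{weaklaycle} to the natural family $T_{X,Y}=c_{Y,X}\circ c_{X,Y}$, which satisfies (\ref{laycle}). Your weak companion $\mathcal{T}=(c_{A,A}\ot id_A)\circ(id_A\ot c_{A,A})\circ(c_{A,A}\ot id_A)\circ(id_A\ot c_{A,A})\circ(c_{A,A}\ot id_A)\circ(id_A\ot c_{A,A})$ is exactly the one produced by Proposition \ref{2twmaps} in this case.
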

\begin{proof}
It is either a consequence of Proposition \ref{weaklaycle}, by noting that the family of natural 
morphisms $T_{X, Y}:=c_{Y, X}\circ c_{X, Y}$ satisfies (\ref{laycle}), or a consequence of Proposition 
\ref{2twmaps}, applied to the twisting maps $Q=P=c_{A, A}$.  
\end{proof}

\begin{example} \label{cocycles}
Let $H$ be a bialgebra over a field $k$, with multiplication $\mu :H\ot H\rightarrow H$, 
$\mu (h\ot h')=hh'$, for $h, h'\in H$, and comultiplication $\Delta :H\rightarrow H\ot H$, for which 
we use a Sweedler-type notation $\Delta (h)=h_1\ot h_2$, for $h\in H$. Let 
$\sigma :H\ot H\rightarrow k$ be a left 2-cocycle, that is we have 
$\sigma (a_1, b_1)\sigma (a_2b_2, c)=\sigma (b_1, c_1)\sigma (a, b_2c_2)$, for all $a, b, c\in H$. 
It is well-known that, if we define a new multiplication on $H$, by $a*b=\sigma (a_1, b_1)a_2b_2$, 
for all $a, b\in H$, then this multiplication is associative and the new algebra structure on $H$ 
is denoted by $_{\sigma }H$. 

Define the linear map $T:H\ot H\rightarrow H\ot H$, $T(a\ot b)=\sigma (a_1, b_1)a_2\ot b_2$, for all 
$a, b\in H$. It was proved in \cite{lpvo} that, if $H$ is unital and counital and $\sigma $ is convolution 
invertible with inverse $\sigma ^{-1}$, then $T$ is a pseudotwistor, with companions 
$\tilde{T}_1, \tilde{T}_2:H\ot H\ot H\rightarrow H\ot H\ot H$, 
$\tilde{T}_1(a\otimes b\otimes c)=\sigma ^{-1}(a_1, b_1)\sigma (a_2,
b_2c_1)a_3\otimes b_3\otimes c_2$ and $\tilde{T}_2(a\otimes b\otimes c)=
\sigma ^{-1}(b_1, c_1)\sigma (a_1b_2, c_2)a_2\otimes b_3\otimes c_3$.

If $\sigma $ is {\em not} convolution invertible, $T$ is no longer a pseudotwistor. However, $T$ is a 
weak pseudotwistor, with weak companion $\mathcal{T}:H\ot H\ot H\rightarrow H\ot H\ot H$,
$\mathcal{T}(a\ot b\ot c)=\sigma (b_1, c_1)\sigma (a_1, b_2c_2)a_2\ot b_3\ot c_3=
\sigma (a_1, b_1)\sigma (a_2b_2, c_1)a_3\ot b_3\ot c_2$, 
for all $a, b, c\in H$,
as one can easily check. 

There exist ''mirror versions'' of these facts. Namely, let $\tau :H\ot H\rightarrow k$ be a right 2-cocycle, i.e. 
$\tau $ satisfies the condition 
$\tau (a_1b_1, c)\tau (a_2, b_2)=\tau (a, b_1c_1)\tau (b_2, c_2)$, for all $a, b, c\in H$. 
If we define a new multiplication on $H$ by $a*b=a_1b_1\tau (a_2, b_2)$, for all $a, b\in H$, 
then this multiplication is associative and the new algebra structure is denoted by $H_{\tau }$. 
This multiplication is afforded by the weak pseudotwistor 
$D:H\ot H\rightarrow H\ot H$, $D(a\ot b)=a_1\ot b_1\tau (a_2, b_2)$, whose weak companion is the 
linear map $\mathcal{D}:H\ot H\ot H\rightarrow H\ot H\ot H$, $\mathcal{D}(a\ot b\ot c)=
a_1\ot b_1\ot c_1\tau (a_2, b_2c_2)\tau (b_3, c_3)=
a_1\ot b_1\ot c_1\tau (a_2b_2, c_2)\tau (a_3, b_3)$.
\end{example}
\begin{example}
Let $A$ be an associative algebra with unit $1$ over a field $k$ and $\lambda , \theta , \nu \in k$ some 
fixed scalars. In \cite {dn} the following linear map was considered:
\begin{eqnarray*}
&&T:A\ot A\rightarrow A\ot A, \;\;\;T(a\ot b)=\lambda ab\ot 1+\theta 1\ot ab -\nu a\ot b, \;\;\;
\forall \;a, b\in A. 
\end{eqnarray*}
Then one can easily check that $T$ is a weak pseudotwistor with weak companion 
$\mathcal{T}:A\ot A\ot A\rightarrow A\ot A\ot A$, 
$\mathcal{T}(a\ot b\ot c)=(\lambda +\theta -\nu )(\lambda abc\ot 1\ot 1+\theta 1\ot 1\ot abc -
\nu a\ot b\ot c)$, for all $a, b, c\in A$. 
\end{example}
\begin{proposition}\label{bimod}
Let $(A, \mu )$ be an associative algebra. We regard $A\ot A$ as an $A$-bimodule in the usual way: 
$a\cdot (b\ot c)\cdot d=ab\ot cd$, for all $a, b, c, d\in A$. Let $\delta :A\rightarrow A\ot A$ be a linear map, 
with Sweedler-type notation $\delta (a)=a_1\ot a_2$, 
that is a morphism of $A$-bimodules, i.e. 
\begin{eqnarray}
&&\delta (ab)=ab_1\ot b_2, \label{bim1} \\
&&\delta (ab)=a_1\ot a_2b, \label{bim2}
\end{eqnarray}
for all $a, b\in A$. 
Define the linear map $T:A\ot A\rightarrow A\ot A$, $T(a\ot b)=b_1a\ot b_2$. Then $T$ satisfies the 
following relations (with standard notation for $T_{12}$, $T_{13}$, $T_{23}$):
\begin{eqnarray}
&&T\circ (id_A\ot \mu )=(id_A\ot \mu )\circ T_{12}, \label{relatie1}\\
&&T\circ (\mu \ot id_A)=(\mu \ot id_A)\circ T_{13}, \label{relatie2}\\
&&T_{12}\circ T_{23}=T_{13}\circ T_{12}.  \label{relatie3}
\end{eqnarray}
Consequently, $T$ is a weak pseudotwistor 
with weak companion $\mathcal{T}=T_{12}\circ T_{23}=T_{13}\circ T_{12}$, and the new 
multiplication on $A$ defined by $a*b=b_1ab_2$ is associative. 
\end{proposition}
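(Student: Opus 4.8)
The plan is to verify the three identities (\ref{relatie1})--(\ref{relatie3}) by a direct element-wise computation, and then to obtain the weak pseudotwistor axioms (\ref{wp1})--(\ref{wp2}) from them formally, so that Theorem \ref{main} applies. I would first record the three leg maps explicitly: writing $\delta (a)=a_1\ot a_2$, one has $T_{12}(a\ot b\ot c)=b_1a\ot b_2\ot c$, $T_{23}(a\ot b\ot c)=a\ot c_1b\ot c_2$ and $T_{13}(a\ot b\ot c)=c_1a\ot b\ot c_2$.

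For (\ref{relatie1}): $T\circ (id_A\ot \mu )(a\ot b\ot c)=T(a\ot bc)=(bc)_1a\ot (bc)_2$, and applying (\ref{bim2}) in the form $\delta (bc)=b_1\ot b_2c$ turns this into $b_1a\ot b_2c=(id_A\ot \mu )\circ T_{12}(a\ot b\ot c)$. For (\ref{relatie2}) no condition on $\delta $ is needed: both sides send $a\ot b\ot c$ to $c_1ab\ot c_2$, using only the associativity of $\mu $. For (\ref{relatie3}): $T_{12}\circ T_{23}(a\ot b\ot c)=T_{12}(a\ot c_1b\ot c_2)$, and here the point is that $T$ must be evaluated on $a\ot (c_1b)$, so $\delta $ is applied to the \emph{product} $c_1b$; by (\ref{bim1}), $\delta (c_1b)=c_1b_1\ot b_2$, hence $T(a\ot c_1b)=c_1b_1a\ot b_2$ and the left-hand side equals $c_1b_1a\ot b_2\ot c_2$. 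The right-hand side is $T_{13}\circ T_{12}(a\ot b\ot c)=T_{13}(b_1a\ot b_2\ot c)=c_1(b_1a)\ot b_2\ot c_2$, which agrees by associativity of $\mu $.

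Granting (\ref{relatie1})--(\ref{relatie3}), set $\mathcal{T}:=T_{12}\circ T_{23}=T_{13}\circ T_{12}$. Since $id_A\ot (\mu \circ T)=(id_A\ot \mu )\circ T_{23}$, relation (\ref{relatie1}) gives $T\circ (id_A\ot (\mu \circ T))=T\circ (id_A\ot \mu )\circ T_{23}=(id_A\ot \mu )\circ T_{12}\circ T_{23}=(id_A\ot \mu )\circ \mathcal{T}$, which is (\ref{wp1}). Likewise $(\mu \circ T)\ot id_A=(\mu \ot id_A)\circ T_{12}$, so relation (\ref{relatie2}) gives $T\circ ((\mu \circ T)\ot id_A)=T\circ (\mu \ot id_A)\circ T_{12}=(\mu \ot id_A)\circ T_{13}\circ T_{12}=(\mu \ot id_A)\circ \mathcal{T}$, which is (\ref{wp2}). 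Thus $T$ is a weak pseudotwistor with weak companion $\mathcal{T}$, and Theorem \ref{main} yields that $\mu \circ T$ is associative; since $(\mu \circ T)(a\ot b)=\mu (b_1a\ot b_2)=b_1ab_2$, the multiplication $a*b=b_1ab_2$ is associative.

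The only subtle step is (\ref{relatie3}): one must expand $\delta (c_1b)$ via the single rule (\ref{bim1}) applied to the product $c_1\cdot b$ (not via any rule involving $\delta (c)$ or $\delta (c_1)$), and keep the factors in the correct order. Everything else is bookkeeping with the leg notation and the formal identities above; in particular no invertibility of $T$ or of $\mathcal{T}$ is used, which is exactly why this fits the weak pseudotwistor framework rather than the pseudotwistor one.
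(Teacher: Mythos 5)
Your proof is correct and takes essentially the same route as the paper: the three relations (\ref{relatie1})--(\ref{relatie3}) are verified by the same element computations (using (\ref{bim2}) for the first, only associativity for the second, and (\ref{bim1}) applied to the product $c_1b$ for the third). The only difference is that you also spell out the formal passage from these relations to (\ref{wp1})--(\ref{wp2}) via $id_A\ot (\mu \circ T)=(id_A\ot \mu )\circ T_{23}$ and $(\mu \circ T)\ot id_A=(\mu \ot id_A)\circ T_{12}$, a step the paper leaves implicit in the word ``Consequently''.
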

\begin{proof} We compute:
\begin{eqnarray*}
T(a\ot bc)&=&(bc)_1a\ot (bc)_2\\
&\overset{(\ref{bim2})}{=}&b_1a\ot b_2c
=((id_A\ot \mu )\circ T_{12})(a\ot b\ot c), 
\end{eqnarray*}
\begin{eqnarray*}
&&T(ab\ot c)=c_1ab\ot c_2=((\mu \ot id_A)\circ T_{13})(a\ot b\ot c), 
\end{eqnarray*}
\begin{eqnarray*}
(T_{12}\circ T_{23})(a\ot b\ot c)&=&T_{12}(a\ot c_1b\ot c_2)\\
&=&(c_1b)_1a\ot (c_1b)_2\ot c_2\\
&\overset{(\ref{bim1})}{=}&c_1b_1a\ot b_2\ot c_2\\
&=&T_{13}(b_1a\ot b_2\ot c)\\
&=&(T_{13}\circ T_{12})(a\ot b\ot c), 
\end{eqnarray*}
finishing the proof.
\end{proof}
\begin{remark}
The definition of the multiplication $*$ in Proposition \ref{bimod} is inspired by 
the result of Aguiar from \cite{aguiar}, showing that if $(A, \mu , \Delta )$ is an 
infinitesimal bialgebra (i.e. $\Delta :A\rightarrow A\ot A$ is a coassociative derivation) 
then the new product on $A$ defined by $a\circ b=b_1ab_2$ is pre-Lie.  
\end{remark}
\begin{remark}
The referee suggested the following extension of Proposition \ref{bimod}. Consider the data $(A, \nu , \sigma, \delta )$, 
where $A$ is an associative algebra, $\nu , \sigma :A\rightarrow A$ are algebra automorphisms and 
$\delta :A\rightarrow A\otimes A$ is an $A$-bimodule map, where the $A$-bimodule structure of 
$A\otimes A$ is now twisted by $\nu $ and $\sigma $, that is 
\begin{eqnarray*}
&&a\cdot (b\otimes c)\cdot d=\nu (a)b\otimes c\sigma (d), \;\;\;\forall \;a, b, c, d\in A.
\end{eqnarray*}
By using the same Sweedler-type notation for $\delta $, namely $\delta (a)=a_1\otimes a_2$, define the linear map 
$T:A\otimes A\rightarrow A\otimes A$, $T(a\otimes b)=\nu ^{-1}(b_1)a\otimes \sigma ^{-1}(b_2)$. Then $T$ 
satisfies the relations (\ref{relatie1})-(\ref{relatie3}), hence it is also a weak pseudotwistor with weak companion 
$\mathcal{T}=T_{12}\circ T_{23}=T_{13}\circ T_{12}$, and the new 
multiplication defined on $A$ by $a*b=\nu ^{-1}(b_1)a\sigma ^{-1}(b_2)$ is associative. 
\end{remark}
\begin{proposition}\label{taretwopstw}
Let $(\mathcal{C}, \ot )$ be a strict monoidal category and $(A, \mu )$ an algebra in
$\mathcal{C}$. Assume that $T$ and $D$ are two weak pseudotwistors for $A$, with weak 
companions $\mathcal{T}$ and respectively $\mathcal{D}$, such that the following conditions are satisfied:
\begin{eqnarray}
&&D\circ (id_A\ot (\mu \circ T\circ D))=(id_A\ot (\mu \circ T))\circ \mathcal{D}, \label{tarecomp1}\\
&&D\circ ((\mu \circ T\circ D)\ot id_A)=((\mu \circ T)\ot id_A)\circ \mathcal{D}. \label{tarecomp2}
\end{eqnarray}
Then $T\circ D$ is a weak pseudotwistor for $A$, with weak companion $\mathcal{T}\circ \mathcal{D}$. 
\end{proposition}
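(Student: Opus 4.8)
The goal is to verify that $T\circ D$ satisfies the two defining axioms \eqref{wp1}--\eqref{wp2} of a weak pseudotwistor, with the candidate weak companion $\mathcal{T}\circ \mathcal{D}$. So I would compute $(T\circ D)\circ (id_A\ot (\mu \circ (T\circ D)))$ and show it equals $(id_A\ot \mu )\circ (\mathcal{T}\circ \mathcal{D})$, and symmetrically for the other axiom. The basic strategy is a telescoping chain of rewrites: peel off $D$ first using the new hypotheses \eqref{tarecomp1}--\eqref{tarecomp2}, which are tailored precisely to convert $D\circ (id_A\ot (\mu \circ T\circ D))$ into $(id_A\ot(\mu\circ T))\circ \mathcal{D}$; then apply $T$ on the outside and use the fact that $T$ itself is a weak pseudotwistor with weak companion $\mathcal{T}$ to absorb the remaining $\mu\circ T$ into $\mathcal{T}$.

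Concretely, for the first axiom I would write
\begin{eqnarray*}
(T\circ D)\circ (id_A\ot (\mu \circ T\circ D))
&=&T\circ \bigl(D\circ (id_A\ot (\mu \circ T\circ D))\bigr)\\
&\overset{(\ref{tarecomp1})}{=}&T\circ (id_A\ot (\mu \circ T))\circ \mathcal{D}\\
&=&\bigl(T\circ (id_A\ot (\mu \circ T))\bigr)\circ \mathcal{D}\\
&\overset{(\ref{wp1})}{=}&(id_A\ot \mu )\circ \mathcal{T}\circ \mathcal{D},
\end{eqnarray*}
where in the last step I use that $T$ is a weak pseudotwistor with weak companion $\mathcal{T}$, i.e. equation \eqref{wp1} applied to $T$. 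The second axiom is handled in exactly the same way, using \eqref{tarecomp2} to strip off $D$ and then \eqref{wp2} for $T$ to strip off $\mu\circ T$, yielding $(\mu\ot id_A)\circ \mathcal{T}\circ\mathcal{D}$. Since both axioms hold with the single morphism $\mathcal{T}\circ\mathcal{D}:A\ot A\ot A\to A\ot A\ot A$ in the companion role, Theorem~\ref{main} then immediately gives that $(A,\mu\circ T\circ D)$ is an algebra, i.e. $T\circ D$ is a weak pseudotwistor with weak companion $\mathcal{T}\circ\mathcal{D}$.

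There is essentially no obstacle here: the hypotheses \eqref{tarecomp1}--\eqref{tarecomp2} were evidently reverse-engineered to make the telescoping work, so the proof is a four-line formal manipulation in each case. The only point requiring a moment's care is to read the composite $\mu\circ T\circ D$ correctly — it is the multiplication of the intended algebra $A^{T\circ D}$, and one must make sure the parenthesization in \eqref{tarecomp1}--\eqref{tarecomp2} matches the $id_A\ot(-)$ and $(-)\ot id_A$ placements in the axioms \eqref{wp1}--\eqref{wp2} — but this is purely bookkeeping and strictness of the monoidal category makes it transparent.
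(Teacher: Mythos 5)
Your proof is correct and follows exactly the paper's own argument: strip off $D$ using (\ref{tarecomp1}) (respectively (\ref{tarecomp2})), then apply (\ref{wp1}) (respectively (\ref{wp2})) for $T$ to obtain $(id_A\ot \mu )\circ \mathcal{T}\circ \mathcal{D}$ and $(\mu \ot id_A)\circ \mathcal{T}\circ \mathcal{D}$. Nothing further is needed.
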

\begin{proof}
We compute:
\begin{eqnarray*}
T\circ D\circ (id_A\ot (\mu \circ T\circ D))&\overset{(\ref{tarecomp1})}{=}&
T\circ (id_A\ot (\mu \circ T))\circ \mathcal{D}\\
&\overset{(\ref{wp1})}{=}&(id_A\ot \mu )\circ \mathcal{T}\circ \mathcal{D}, 
\end{eqnarray*}
\begin{eqnarray*}
T\circ D\circ ((\mu \circ T\circ D)\ot id_A)&\overset{(\ref{tarecomp2})}{=}&
T\circ ((\mu \circ T)\ot id_A)\circ \mathcal{D}\\
&\overset{(\ref{wp2})}{=}&(\mu \ot id_A)\circ \mathcal{T}\circ \mathcal{D},
\end{eqnarray*}
finishing the proof.
\end{proof}
\begin{corollary}\label{twopstw}
Let $(\mathcal{C}, \ot )$ be a strict monoidal category and $(A, \mu )$ an algebra in
$\mathcal{C}$. Assume that $T$ and $D$ are two weak pseudotwistors for $A$, with weak 
companions $\mathcal{T}$ and respectively $\mathcal{D}$, such that the following conditions are satisfied:
\begin{eqnarray}
&&\mu \circ T\circ D=\mu \circ D\circ T, \label{comp1}\\
&&\mathcal{D}\circ (id_A\ot T)=(id_A\ot T)\circ \mathcal{D}, \label{comp2}\\
&&\mathcal{D}\circ (T\ot id_A)=(T\ot id_A)\circ \mathcal{D}. \label{comp3}
\end{eqnarray}
Then $T\circ D$ is a weak pseudotwistor for $A$, with weak companion $\mathcal{T}\circ \mathcal{D}$. 
\end{corollary}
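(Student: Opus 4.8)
The plan is to derive Corollary~\ref{twopstw} from Proposition~\ref{taretwopstw} by showing that the hypotheses (\ref{comp1})--(\ref{comp3}) imply the hypotheses (\ref{tarecomp1}) and (\ref{tarecomp2}) of the proposition. So I will verify (\ref{tarecomp1}) and (\ref{tarecomp2}) directly, then invoke Proposition~\ref{taretwopstw} to conclude that $T\circ D$ is a weak pseudotwistor with weak companion $\mathcal{T}\circ\mathcal{D}$.

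To verify (\ref{tarecomp1}), I would start from the left-hand side $D\circ (id_A\ot (\mu\circ T\circ D))$. First I use (\ref{comp1}) to rewrite $\mu\circ T\circ D=\mu\circ D\circ T$ inside the tensor factor, obtaining $D\circ (id_A\ot (\mu\circ D\circ T))$. Now $id_A\ot (\mu\circ D\circ T) = (id_A\ot (\mu\circ D))\circ (id_A\ot T)$ (writing the second tensor slot as a composite), so the expression becomes $D\circ (id_A\ot(\mu\circ D))\circ (id_A\ot T)$. The defining equation (\ref{wp1}) for $D$ with weak companion $\mathcal{D}$ gives $D\circ(id_A\ot(\mu\circ D))=(id_A\ot\mu)\circ\mathcal{D}$, so I get $(id_A\ot\mu)\circ\mathcal{D}\circ(id_A\ot T)$. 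Finally, applying (\ref{comp2}) to commute $\mathcal{D}$ past $id_A\ot T$ yields $(id_A\ot\mu)\circ(id_A\ot T)\circ\mathcal{D}=(id_A\ot(\mu\circ T))\circ\mathcal{D}$, which is exactly the right-hand side of (\ref{tarecomp1}). The verification of (\ref{tarecomp2}) is the mirror computation: start from $D\circ((\mu\circ T\circ D)\ot id_A)$, use (\ref{comp1}) to replace $\mu\circ T\circ D$ by $\mu\circ D\circ T$, factor $(\mu\circ D\circ T)\ot id_A=((\mu\circ D)\ot id_A)\circ(T\ot id_A)$, apply (\ref{wp2}) for $D$ to turn $D\circ((\mu\circ D)\ot id_A)$ into $(\mu\ot id_A)\circ\mathcal{D}$, and then use (\ref{comp3}) to commute $\mathcal{D}$ past $T\ot id_A$, landing on $((\mu\circ T)\ot id_A)\circ\mathcal{D}$.

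Once both (\ref{tarecomp1}) and (\ref{tarecomp2}) are established, Proposition~\ref{taretwopstw} applies verbatim and gives the conclusion. There is no real obstacle here: the only thing to be slightly careful about is the bookkeeping of how the composite $\mu\circ D\circ T$ sits inside a single tensor slot, i.e. correctly splitting $id_A\ot(\mu\circ D\circ T)$ as $(id_A\ot(\mu\circ D))\circ(id_A\ot T)$ (and the analogous splitting in the left-tensor case), which is just functoriality of $\ot$ in a strict monoidal category. Everything else is a direct substitution using the already-proved facts. Thus the proof is essentially a two-line chain of equalities in each of the two cases, followed by an appeal to Proposition~\ref{taretwopstw}.
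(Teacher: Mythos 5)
Your argument is correct and is essentially identical to the paper's own proof: the paper verifies (\ref{tarecomp1}) via exactly the same chain (use (\ref{comp1}), split off $id_A\ot T$, apply (\ref{wp1}) for $D$, then commute with (\ref{comp2})), leaves (\ref{tarecomp2}) as the mirror case, and concludes by Proposition \ref{taretwopstw}. Nothing is missing.
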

\begin{proof}
We check (\ref{tarecomp1}), while (\ref{tarecomp2}) is similar and left to the reader:
\begin{eqnarray*}
D\circ (id_A\ot (\mu \circ T\circ D))&\overset{(\ref{comp1})}{=}&
D\circ (id_A\ot (\mu \circ D\circ T))\\
&=&D\circ (id_A\ot (\mu \circ D))\circ (id_A\ot T)\\
&\overset{(\ref{wp1})}{=}&(id_A\ot \mu )\circ \mathcal{D}\circ (id_A\ot T)\\
&\overset{(\ref{comp2})}{=}&(id_A\ot \mu )\circ (id_A\ot T)\circ \mathcal{D}\\
&=&(id_A\ot (\mu \circ T))\circ \mathcal{D}, 
\end{eqnarray*}
finishing the proof. 
\end{proof}

Let $H$ be a bialgebra over a field $k$ as in Example \ref{cocycles}, $\sigma $ (respectively $\tau $) 
a left (respectively right) 2-cocycle on $H$ and $T$ and $D$ the weak pseudotwistors 
defined in Example \ref{cocycles}. The multiplication defined on $H$ by 
\begin{eqnarray}
&&a*b=\sigma (a_1, b_1)a_2b_2\tau (a_3, b_3), \;\;\;\forall\;a, b\in H, \label{star}
\end{eqnarray}
is associative. We can obtain this as consequence of Corollary \ref{twopstw}. It is 
obvious that $T\circ D=D\circ T$, so (\ref{comp1}) is satisfied. It is easy to see that 
(\ref{comp2}) and (\ref{comp3}) are satisfied too, so $T\circ D$ is a weak pseudotwistor and 
clearly $\mu \circ T\circ D$ is exactly the multiplication $*$ defined by (\ref{star}). 

We present now another application of Proposition \ref{taretwopstw}, generalizing Remark 
4.11 in \cite{psvo}:
\begin{proposition}
Let $(\mathcal{C}, \ot )$ be a strict monoidal category, $(A, \mu )$ an algebra in
$\mathcal{C}$, $T:A\ot A\rightarrow A\ot A$ a weak pseudotwistor with weak 
companion $\mathcal{T}$ and $D_{X, Y}:X\ot Y\rightarrow X\ot Y$ a family of natural morphisms in 
$\mathcal{C}$ satisfying (\ref{laycle}). Then $T\circ D_{A, A}$ is a weak pseudotwistor for $A$. 
\end{proposition}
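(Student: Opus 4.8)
The plan is to reduce this to Proposition~\ref{taretwopstw}, taking the two weak pseudotwistors appearing there to be $T$ itself (with weak companion $\mathcal{T}$) and $D:=D_{A,A}$. By Proposition~\ref{weaklaycle}, $D_{A,A}$ is indeed a weak pseudotwistor for $A$, with weak companion
\[
\mathcal{D}:=D_{A\ot A,A}\circ (D_{A,A}\ot id_A)=D_{A,A\ot A}\circ (id_A\ot D_{A,A}),
\]
the two expressions agreeing by (\ref{laycle}) for $X=Y=Z=A$. It then remains only to verify the two compatibility conditions (\ref{tarecomp1}) and (\ref{tarecomp2}) for this choice of $(T,\mathcal{T})$ and $(D_{A,A},\mathcal{D})$; granting them, Proposition~\ref{taretwopstw} yields that $T\circ D_{A,A}$ is a weak pseudotwistor for $A$, with weak companion $\mathcal{T}\circ \mathcal{D}$.

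To verify (\ref{tarecomp1}) I would use only the naturality of the family $D$. First, naturality with respect to the pair $id_A$ and $\mu\circ T\circ D_{A,A}:A\ot A\rightarrow A$ gives
\[
D_{A,A}\circ \bigl(id_A\ot (\mu\circ T\circ D_{A,A})\bigr)=\bigl(id_A\ot (\mu\circ T\circ D_{A,A})\bigr)\circ D_{A,A\ot A}.
\]
Next I would rewrite $id_A\ot (\mu\circ T\circ D_{A,A})=\bigl(id_A\ot (\mu\circ T)\bigr)\circ (id_A\ot D_{A,A})$ and apply naturality once more, this time with respect to $id_A$ and the morphism $D_{A,A}$ itself, to obtain the commutation $(id_A\ot D_{A,A})\circ D_{A,A\ot A}=D_{A,A\ot A}\circ (id_A\ot D_{A,A})$. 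Substituting, the right-hand side becomes $\bigl(id_A\ot (\mu\circ T)\bigr)\circ D_{A,A\ot A}\circ (id_A\ot D_{A,A})=\bigl(id_A\ot (\mu\circ T)\bigr)\circ \mathcal{D}$, which is precisely (\ref{tarecomp1}). Condition (\ref{tarecomp2}) is established symmetrically, using instead the pair $\mu\circ T\circ D_{A,A}$ and $id_A$, the other expression for $\mathcal{D}$, and the commutation $(D_{A,A}\ot id_A)\circ D_{A\ot A,A}=D_{A\ot A,A}\circ (D_{A,A}\ot id_A)$.

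The only point requiring any thought is recognizing that the statement is exactly an instance of Proposition~\ref{taretwopstw}, and that the resulting ''companion'' identities (\ref{tarecomp1})--(\ref{tarecomp2}) hold for purely formal reasons: the self-naturality of $D$ expressing that $D_{A,A\ot A}$ commutes with $id_A\ot D_{A,A}$ and that $D_{A\ot A,A}$ commutes with $D_{A,A}\ot id_A$. After that observation the verification is routine bookkeeping with the naturality squares; the hypothesis (\ref{laycle}) enters only through Proposition~\ref{weaklaycle}, to guarantee that $D_{A,A}$ is a weak pseudotwistor (equivalently, that the two displayed expressions for $\mathcal{D}$ coincide), and not in the checking of (\ref{tarecomp1})--(\ref{tarecomp2}) beyond that. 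I do not foresee a genuine obstacle.
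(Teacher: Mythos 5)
Your proposal is correct and follows essentially the same route as the paper: both reduce the statement to Proposition~\ref{taretwopstw} with $D=D_{A,A}$ and companion $\mathcal{D}=D_{A,A\ot A}\circ(id_A\ot D_{A,A})=D_{A\ot A,A}\circ(D_{A,A}\ot id_A)$ from Proposition~\ref{weaklaycle}, and verify (\ref{tarecomp1})--(\ref{tarecomp2}) purely from the naturality of the family $D$. The only cosmetic difference is that the paper applies naturality to $\mu\circ T$ and then composes on the right with $id_A\ot D_{A,A}$ (resp.\ $D_{A,A}\ot id_A$), whereas you apply naturality to $\mu\circ T\circ D_{A,A}$ and then use a second naturality square to commute $id_A\ot D_{A,A}$ past $D_{A,A\ot A}$; both are valid and yield the same identities.
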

\begin{proof}
The naturality of $D_{X, Y}$ implies: 
\begin{eqnarray*}
&&D_{A, A}\circ (id_A\ot (\mu \circ T))=(id_A\ot (\mu \circ T))\circ D_{A, A\ot A}, \\
&&D_{A, A}\circ ((\mu \circ T)\ot id_A)=((\mu \circ T)\ot id_A)\circ D_{A\ot A, A}. 
\end{eqnarray*}
By composing on the right with $id_A\ot D_{A, A}$ and respectively $D_{A, A}\ot id_A$ we obtain:
\begin{eqnarray*}
&&D_{A, A}\circ (id_A\ot (\mu \circ T\circ D_{A, A}))=(id_A\ot (\mu \circ T))\circ D_{A, A\ot A}
\circ (id_A\ot D_{A, A}), \\
&&D_{A, A}\circ ((\mu \circ T\circ D_{A, A})\ot id_A)=((\mu \circ T)\ot id_A)\circ D_{A\ot A, A}
\circ (D_{A, A}\ot id_A).  
\end{eqnarray*}
The weak companion of $D_{A, A}$ is  
$\mathcal{D}=D_{A, A\ot A}\circ (id_A\ot D_{A, A})=D_{A\ot A, A}\circ (D_{A, A}\ot id_A)$, 
so we obtained 
(\ref{tarecomp1}) and (\ref{tarecomp2}).   
\end{proof}

Let $(\mathcal{C}, \ot )$ be a strict monoidal category, $(A, \mu )$ an algebra in
$\mathcal{C}$ and $T:A\ot A\rightarrow A\ot A$ a weak pseudotwistor. In view of Example \ref{cocycles}, 
we may think of $T$ as some sort of 2-cocycle for $A$. We will see that we can define as well some 
sort of 2-coboundaries.
\begin{proposition} \label{2coboundaries}
Let $(\mathcal{C}, \ot )$ be a strict monoidal category and $(A, \mu )$ an algebra in
$\mathcal{C}$. Assume that we are given a triple $(f, F, \mathcal{F})$, where $f:A\rightarrow A$, 
$F:A\ot A\rightarrow A\ot A$ and $\mathcal{F}:A\ot A\ot A\rightarrow A\ot A\ot A$ are 
morphisms in $\mathcal{C}$ satisfying the following conditions:
\begin{eqnarray}
&&F\circ (id_A\ot \mu )=(id_A\ot \mu )\circ \mathcal{F}, \label{cob1} \\
&&F\circ (\mu \ot id_A)=(\mu \ot id_A)\circ \mathcal{F}, \label{cob2} \\
&&f\circ \mu \circ F=\mu. \label{cob3}
\end{eqnarray}
Then the morphism $D:A\ot A\rightarrow A\ot A$, $D=F\circ (f\ot f)$ is a weak pseudotwistor with 
weak companion $\mathcal{D}:A\ot A\ot A\rightarrow A\ot A\ot A$, $\mathcal{D}=\mathcal{F}\circ 
(f\ot f\ot f)$. We denote $\partial (f, F)=D$ and call such a weak pseudotwistor 2-coboundary for $A$. 
Moreover, $f$ is an algebra homomorphism from $A^{\partial (f, F)}$ to $A$, so in particular if $f$ 
is invertible then $A^{\partial (f, F)}$ and $A$ are isomorphic as algebras. 
\end{proposition}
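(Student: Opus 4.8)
The plan is to verify directly that $D = F\circ(f\ot f)$ together with $\mathcal{D}=\mathcal{F}\circ(f\ot f\ot f)$ satisfies the two weak pseudotwistor axioms (\ref{wp1}) and (\ref{wp2}), then check the homomorphism claim separately. The statement is essentially computational, so the work splits into three short pieces: establishing the first axiom, establishing the second axiom (which will be symmetric), and the final remark about $f$.

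First I would compute $D\circ(id_A\ot(\mu\circ D))$. Unfolding $D=F\circ(f\ot f)$ in both slots, the inner $id_A\ot(\mu\circ D)$ becomes $id_A\ot(\mu\circ F\circ(f\ot f))$, so the whole composite is
\begin{eqnarray*}
F\circ(f\ot f)\circ(id_A\ot(\mu\circ F\circ(f\ot f))).
\end{eqnarray*}
The key trick is to use (\ref{cob3}) in the form $\mu\circ F = f^{-1}\circ\mu$... but wait, $f$ need not be invertible. Instead I should rearrange so that $f\circ\mu\circ F$ appears: pull the functorial identity $(f\ot f)\circ(id_A\ot\mu) = (id_A\ot\mu)\circ(f\ot f\ot f)$ through, and more importantly $(f\ot f)\circ(id_A\ot(\mu\circ F\circ(f\ot f))) = (id_A\ot(f\circ\mu\circ F))\circ(f\ot f\ot f)\circ(\cdots)$. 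So one gets $(id_A\ot(f\circ\mu\circ F)) = (id_A\ot\mu)$ by (\ref{cob3}), reducing the inner factor to $id_A\ot\mu$ applied to $(f\ot f\ot f)$ in the right spots. Concretely I expect the computation to collapse to $F\circ(id_A\ot\mu)\circ(f\ot f\ot f)$, and then (\ref{cob1}) turns $F\circ(id_A\ot\mu)$ into $(id_A\ot\mu)\circ\mathcal{F}$, giving $(id_A\ot\mu)\circ\mathcal{F}\circ(f\ot f\ot f) = (id_A\ot\mu)\circ\mathcal{D}$, which is (\ref{wp1}). The computation for (\ref{wp2}) is identical with (\ref{cob2}) in place of (\ref{cob1}) and the left/right roles swapped. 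By Theorem \ref{main}, $D$ is then a weak pseudotwistor with weak companion $\mathcal{D}$, so $A^D = A^{\partial(f,F)}$ is an algebra.

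For the last assertion: the multiplication of $A^{\partial(f,F)}$ is $\mu\circ D = \mu\circ F\circ(f\ot f)$, and we must show $f\circ(\mu\circ D) = \mu\circ(f\ot f)$ — i.e. that $f$ intertwines the twisted product with the original product. This is immediate from (\ref{cob3}): $f\circ\mu\circ F\circ(f\ot f) = \mu\circ(f\ot f)$. Hence $f$ is an algebra homomorphism $A^{\partial(f,F)}\to A$, and if $f$ is invertible it is an isomorphism.

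The only subtle point — and the thing I would be most careful about — is the bookkeeping of where the three copies of $f$ land when pushing $(f\ot f)$ and $(f\ot f\ot f)$ past $id_A\ot\mu$, $\mu\ot id_A$, and the companions; one must make sure the functoriality of $\ot$ is applied so that $f\circ\mu\circ F$ (not, say, $\mu\circ F\circ f$) is the expression that gets simplified by (\ref{cob3}). Everything else is a routine chase of identities, with no conceptual obstacle.
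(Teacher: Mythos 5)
Your proposal is correct and follows essentially the same route as the paper's proof: unfold $D$, use the interchange law to make $f\circ\mu\circ F$ appear so that (\ref{cob3}) collapses the composite to $F\circ(id_A\ot\mu)\circ(f\ot f\ot f)$, then apply (\ref{cob1}) (resp.\ (\ref{cob2}) for the symmetric axiom), and read the homomorphism claim off (\ref{cob3}). The paper does the same bookkeeping by factoring $f\ot f=(f\ot id_A)\circ(id_A\ot f)$, a purely cosmetic difference.
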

\begin{proof}
We check (\ref{wp1}) for $D$ and $\mathcal{D}$, while (\ref{wp2}) is similar and left to the reader: 
\begin{eqnarray*}
D\circ (id_A\ot \mu )\circ (id_A\ot D)&=&F\circ (f\ot f)\circ (id_A\ot \mu )\circ (id_A\ot F)\circ 
(id_A\ot f\ot f)\\
&=&F\circ (f\ot id_A)\circ (id_A\ot f)\circ (id_A\ot \mu )\circ (id_A\ot F)\\
&&\circ (id_A\ot f\ot f)\\
&=&F\circ (f\ot id_A)\circ (id_A\ot f\circ \mu \circ F)\circ (id_A\ot f\ot f)\\
&\overset{(\ref{cob3})}{=}&F\circ (f\ot id_A)\circ (id_A\ot \mu )\circ (id_A\ot f\ot f)\\
&=&F\circ (id_A\ot \mu )\circ (f\ot id_A\ot id_A)\circ (id_A\ot f\ot f)\\
&\overset{(\ref{cob1})}{=}&(id_A\ot \mu )\circ \mathcal{F}\circ (f\ot f\ot f)
=(id_A\ot \mu )\circ \mathcal{D}. 
\end{eqnarray*}
The fact that $f$ is an algebra homomorphism $A^{\partial (f, F)}\rightarrow A$ follows 
immediately from (\ref{cob3}). 
\end{proof}
\begin{example}
Let $(\mathcal{C}, \ot )$ be a strict monoidal category and $R_X:X\rightarrow X$ a family of 
natural isomorphisms in $\mathcal{C}$. If  
$(A, \mu )$ is an algebra in $\mathcal{C}$, then $f:=R_A$, $F:=R_{A\ot A}^{-1}$ and 
$\mathcal{F}:=R_{A\ot A\ot A}^{-1}$ satisfy the hypotheses of Proposition \ref{2coboundaries}. 
Indeed, the naturality of $R$ implies $R_{A\ot A}\circ (id_A\ot \mu )=(id_A\ot \mu )\circ R_{A\ot A\ot A}$, 
which is (\ref{cob1}), $R_{A\ot A}\circ (\mu \ot id_A)=(\mu \ot id_A)\circ R_{A\ot A\ot A}$, which is 
(\ref{cob2}), and $R_A\circ \mu =\mu \circ R_{A\ot A}$, which is (\ref{cob3}). 
\end{example}

Another example will be given in the next section. 
\section{Rota-Baxter type operators}\label{section3}
\setcounter{equation}{0}
${\;\;\;}$
We recall (see for instance the recent survey \cite{patras} and references therein) 
the concept of Rota-Baxter operator. 
Let $(A, \mu )$ be an associative algebra over a field $k$ 
and $\lambda \in k$ a fixed element. A linear map $R:A\rightarrow A$ is called 
a Rota-Baxter operator of weight $\lambda $ if it satisfies the relation
\begin{eqnarray}
&&R(a)R(b)=R(R(a)b+aR(b)+\lambda ab), \;\;\; \forall \;a, b\in A. \label{RB}
\end{eqnarray}
If this is the case, the new multiplication $*_{\lambda }$ on $A$ defined by 
\begin{eqnarray*}
&&a*_{\lambda }b=R(a)b+aR(b)+\lambda ab, \;\;\; \forall \;a, b\in A, 
\end{eqnarray*}
and called the {\em double product}, is associative and $R$ is an algebra map 
from $(A, *_{\lambda })$ to $(A, \mu )$. 
\begin{proposition} \label{RotaBaxter}
If $R:A\rightarrow A$ is a Rota-Baxter operator of weight $\lambda $ on an algebra $(A, \mu )$ as above, 
then the linear map 
\begin{eqnarray}
&&T:A\ot A\rightarrow A\ot A, \;\;\;T(a\ot b)=R(a)\ot b+a\ot R(b)+\lambda a\ot b, \;\;\;\forall \;a, b\in A, 
\label{RBpstw}
\end{eqnarray}
is a weak pseudotwistor with weak companion $\mathcal{T}:A\otimes A\otimes A 
\rightarrow A\otimes A\otimes A$, 
\begin{eqnarray*}
&&\mathcal{T}(a\ot b\ot c)=R(a)\ot R(b)\ot c+R(a)\ot b\ot R(c)
+a\ot R(b)\ot R(c)+\lambda R(a)\ot b\ot c\\
&&\;\;\;\;\;\;\;\;\;\;\;\;\;\;\;\;\;\;\;\;\;\;\;\;\;\;\;+\lambda a\ot R(b)\ot c+\lambda a\ot b\ot R(c)
+\lambda ^2a\ot b\ot c, 
\end{eqnarray*}
and the new associative product $\mu \circ T$ on $A$ coincides with the double 
product $*_{\lambda }$. 
\end{proposition}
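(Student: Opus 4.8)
The plan is to verify the two weak pseudotwistor axioms \eqref{wp1} and \eqref{wp2} directly, by evaluating both sides on a general element $a\ot b\ot c\in A\ot A\ot A$ and using only the Rota-Baxter identity \eqref{RB}. Since $\mathcal{T}$ is already written down explicitly, there is no guesswork involved; the point is purely to confirm that this $\mathcal{T}$ works, and then Theorem~\ref{main} automatically gives that $\mu\circ T$ is associative. So the real content is a bookkeeping computation.

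First I would compute the left-hand side of \eqref{wp1}, namely $T\circ(id_A\ot(\mu\circ T))(a\ot b\ot c)$. We have $(\mu\circ T)(b\ot c)=R(b)c+bR(c)+\lambda bc=b*_\lambda c$, so the inner step produces $a\ot(b*_\lambda c)$, and then applying $T$ gives $R(a)\ot(b*_\lambda c)+a\ot R(b*_\lambda c)+\lambda a\ot(b*_\lambda c)$. Here I would use that $R(b*_\lambda c)=R(R(b)c+bR(c)+\lambda bc)=R(b)R(c)$ by \eqref{RB}. Expanding $b*_\lambda c$ in the remaining two terms, the LHS becomes
\begin{eqnarray*}
&&R(a)\ot R(b)c+R(a)\ot bR(c)+\lambda R(a)\ot bc+a\ot R(b)R(c)\\
&&\;\;\;+\lambda a\ot R(b)c+\lambda a\ot bR(c)+\lambda^2 a\ot bc.
\end{eqnarray*}
On the other side, $(id_A\ot\mu)\circ\mathcal{T}(a\ot b\ot c)$ is obtained by multiplying the last two tensorands in each of the seven terms of $\mathcal{T}$, which yields exactly the same seven expressions; so \eqref{wp1} holds. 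The verification of \eqref{wp2} is the mirror image: $(\mu\circ T)(a\ot b)=a*_\lambda b$, applying $T$ gives $R(a*_\lambda b)\ot c+(a*_\lambda b)\ot R(c)+\lambda(a*_\lambda b)\ot c$, one uses $R(a*_\lambda b)=R(a)R(b)$, expands, and matches term-by-term against $(\mu\ot id_A)\circ\mathcal{T}(a\ot b\ot c)$, which multiplies the first two tensorands of each term of $\mathcal{T}$.

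The only mild subtlety — and the step most prone to slips — is keeping the seven terms of $\mathcal{T}$ straight and making sure the symmetric roles of the three factors are assigned correctly when contracting on the left pair versus the right pair; but there is no conceptual obstacle. Finally, having checked \eqref{wp1}–\eqref{wp2}, Theorem~\ref{main} gives that $(A,\mu\circ T)$ is an algebra, and by construction $(\mu\circ T)(a\ot b)=R(a)b+aR(b)+\lambda ab=a*_\lambda b$, so $\mu\circ T$ is precisely the double product $*_\lambda$, completing the proof.
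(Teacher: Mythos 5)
Your proposal is correct and follows essentially the same route as the paper: a direct term-by-term verification of \eqref{wp1} (applying the Rota-Baxter identity \eqref{RB} to the lone $a\ot R(R(b)c+bR(c)+\lambda bc)$ term) with the mirror computation for \eqref{wp2}, and then invoking Theorem~\ref{main} together with the obvious identification $\mu\circ T=*_{\lambda}$. No gaps; the seven resulting terms you list match exactly those produced by $(id_A\ot\mu)\circ\mathcal{T}$, as in the paper's proof.
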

\begin{proof}
Obviously $\mu \circ T$ coincides with $*_{\lambda }$, so we only need to prove that $T$ is a weak 
pseudotwistor. We compute, for $a, b, c\in A$:
\begin{eqnarray*}
T\circ (id_A\ot (\mu \circ T))(a\ot b\ot c)&=&T(a\ot R(b)c+a\ot bR(c)+\lambda a\ot bc)\\
&=&R(a)\ot R(b)c+a\ot R(R(b)c)+\lambda a\ot R(b)c\\
&&+R(a)\ot bR(c)+a\ot R(bR(c))+\lambda a\ot bR(c)\\
&&+\lambda R(a)\ot bc+\lambda a\ot R(bc)+\lambda ^2a\ot bc\\
&=&R(a)\ot R(b)c+\lambda a\ot R(b)c+R(a)\ot bR(c)+\lambda a\ot bR(c)\\
&&+\lambda R(a)\ot bc+\lambda ^2a\ot bc+a\ot R(R(b)c+bR(c)+\lambda bc)\\
&\overset{(\ref{RB})}{=}&R(a)\ot R(b)c+\lambda a\ot R(b)c+R(a)\ot bR(c)+\lambda a\ot bR(c)\\
&&+\lambda R(a)\ot bc+\lambda ^2a\ot bc+a\ot R(b)R(c)\\
&=&(id_A\ot \mu )\circ \mathcal{T}(a\ot b\ot c). 
\end{eqnarray*}
A similar computation shows that: 
$T\circ ((\mu \circ T)\otimes id_A)(a\ot b\ot c)=(\mu \otimes id_A)\circ \mathcal{T}(a\ot b\ot c)$. 
\end{proof}

Let $A$ be an associative algebra over a field $k$ 
and $\beta , \gamma :A\rightarrow A$ two commuting Rota-Baxter operators 
of weight $0$. It was proved in \cite{quadrialgebras} (as a consequence of the fact that, 
via $\beta $ and $\gamma $, $A$ becomes a so-called {\em quadri-algebra}) that 
the new multiplication defined on $A$ by 
\begin{eqnarray}
&&a*b=\gamma \beta (a)b+\beta (a)\gamma (b)+\gamma (a)\beta (b)+a\gamma \beta (b), \;\;\;
\forall \;a, b\in A,  \label{2RB}
\end{eqnarray}
is associative. We want to obtain this as a consequence of Corollary \ref{twopstw}. By Proposition 
\ref{RotaBaxter}, we can 
consider the 
weak pseudotwistors $T, D:A\ot A\rightarrow A\ot A$, 
\begin{eqnarray*}
&&T(a\ot b)=\gamma (a)\ot b+a\ot \gamma (b), \;\;\;D(a\ot b)=\beta (a)\ot b+a\ot \beta (b), 
\end{eqnarray*}
with weak companions $\mathcal{T}$ and respectively $\mathcal{D}$ defined by 
\begin{eqnarray*}
&&\mathcal{T}(a\ot b\ot c)=\gamma (a)\ot \gamma (b)\ot c+\gamma (a)\ot b\ot \gamma (c)+
a\ot \gamma (b)\ot \gamma (c), \\
&&\mathcal{D}(a\ot b\ot c)=\beta (a)\ot \beta (b)\ot c+\beta (a)\ot b\ot \beta (c)+
a\ot \beta (b)\ot \beta (c).
\end{eqnarray*}
Since $\gamma $ and $\beta $ commute, it is obvious that $T\circ D=D\circ T$, so (\ref{comp1}) is 
satisfied. An easy computation shows that (\ref{comp2}) and (\ref{comp3}) are 
also satisfied, so $T\circ D$ is a weak pseudotwistor and obviously 
$\mu \circ T\circ D$ is exactly the multiplication (\ref{2RB}). 
\begin{example}
Let $A$ be an associative algebra over a field $k$ and $R:A\rightarrow A$ a bijective Rota-Baxter 
operator of weight $\lambda $, with inverse $R^{-1}$. Then the linear maps $f:=R$, 
$F:A\ot A\rightarrow A\ot A$, $F(a\ot b)=a\ot R^{-1}(b)+R^{-1}(a)\ot b+\lambda R^{-1}(a)\ot R^{-1}(b)$, 
and $\mathcal{F}:A\ot A\ot A\rightarrow A\ot A\ot A$, $\mathcal{F}(a\ot b\ot c)=R^{-1}(a)\ot b\ot c+
a\ot R^{-1}(b)\ot c+a\ot b\ot R^{-1}(c)+\lambda a\ot R^{-1}(b)\ot R^{-1}(c)+
\lambda R^{-1}(a)\ot b\ot R^{-1}(c)+\lambda R^{-1}(a)\ot R^{-1}(b)\ot c+
\lambda ^2R^{-1}(a)\ot R^{-1}(b)\ot R^{-1}(c)$, satisfy the hypotheses of Proposition 
\ref{2coboundaries}, as one can easily check, and the weak pseudotwistor  $\partial (f, F)$ 
(notation as in Proposition \ref{2coboundaries}) 
is the 
one defined by (\ref{RBpstw}). 
\end{example}
\begin{remark}
Bijective Rota-Baxter operators exist. For example, if $A$ is an associative algebra over a field $k$ 
and $\lambda \in k$, $\lambda \neq 0$, then the (bijective) linear map $R:A\rightarrow A$, 
$R(a)=-\lambda a$, for all $a\in A$, is a Rota-Baxter operator of weight $\lambda $, cf. \cite{guodifferential}. 
\end{remark}
\begin{definition}
Let $A$ be an associative algebra over a field $k$ and $\alpha , \beta :A\rightarrow A$ two linear maps. 
A linear map $R:A\rightarrow A$ will be called an {\em $(\alpha , \beta )$-Rota-Baxter operator} if 
the following conditions are satisfied, for all $a, b\in A$:
\begin{eqnarray*}
&&\alpha (R(a)R(b))=\alpha (R(a))\alpha (R(b)), \\
&&\beta (R(a)R(b))=\beta (R(a))\beta (R(b)), \\
&&R(a)R(b)=R(\alpha (R(a))b+a\beta (R(b))).
\end{eqnarray*}
\end{definition}

Obviously, an $(id_A, id_A)$-Rota-Baxter operator is just a Rota-Baxter operator of weight $0$. 
A nontrivial example (which actually inspired this concept) may be found in \cite{twisteddendriform}: 
$A$ is the algebra of continuous functions on $\mathbb{R}$ with values in some unital algebra $B$, 
$q$ is a number with $0<q<1$, $\alpha =id_A$, $\beta =M_q$ is the $q$-dilation operator and 
$R=I_q$ is the Jackson $q$-integral. Then formula (20) in \cite{twisteddendriform} says 
exactly that $R$ is an $(\alpha , \beta )$-Rota-Baxter operator. 

\begin{proposition}
If $R$ is an $(\alpha , \beta )$-Rota-Baxter operator as above, 
then the linear map 
\begin{eqnarray*}
&&T:A\ot A\rightarrow A\ot A, \;\;\;T(a\ot b)=\alpha (R(a))\ot b+a\ot \beta (R(b)), \;\;\;\forall \;a, b\in A, 
\end{eqnarray*}
is a weak pseudotwistor with weak companion $\mathcal{T}:A\otimes A\otimes A 
\rightarrow A\otimes A\otimes A$, 
\begin{eqnarray*}
&&\mathcal{T}(a\ot b\ot c)=\alpha (R(a))\ot \alpha (R(b))\ot c+\alpha (R(a))\ot b\ot \beta (R(c)) 
+a\ot \beta (R(b))\ot \beta (R(c)).
\end{eqnarray*}
Consequently, the new multiplication defined on $A$ by the formula $a*b=\alpha (R(a))b+a\beta (R(b))$, 
for all $a, b\in A$, is associative. 
\end{proposition}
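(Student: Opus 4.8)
The plan is to verify directly that $T$ and the proposed $\mathcal{T}$ satisfy the two axioms (\ref{wp1}) and (\ref{wp2}) of Theorem~\ref{main}. Once this is done, the final assertion is automatic: $\mu\circ T$ is by definition the multiplication $a*b=\alpha(R(a))b+a\beta(R(b))$, so Theorem~\ref{main} guarantees that $(A,\mu\circ T)$ is an associative algebra. Throughout, $T$ and $\mathcal{T}$ are to be extended $k$-linearly from simple tensors before any expansion is carried out.

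The computational heart of the argument is a single observation coming from the defining identities of an $(\alpha,\beta)$-Rota-Baxter operator. For all $a,b\in A$, the third axiom gives $R\bigl(\alpha(R(a))b+a\beta(R(b))\bigr)=R(a)R(b)$, and then the first and second axioms yield respectively
\[
\alpha\bigl(R(\alpha(R(a))b+a\beta(R(b)))\bigr)=\alpha(R(a))\alpha(R(b)),\qquad
\beta\bigl(R(\alpha(R(a))b+a\beta(R(b)))\bigr)=\beta(R(a))\beta(R(b)).
\]
In other words, the Rota-Baxter-type relation is what lets $R$ applied to the two-term combination be replaced by the product $R(a)R(b)$, after which the multiplicativity conditions on $\alpha$ and $\beta$ apply.

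With this in hand I would check (\ref{wp1}) on a simple tensor $a\ot b\ot c$: the left-hand side is $T\bigl(a\ot(b*c)\bigr)=T\bigl(a\ot(\alpha(R(b))c+b\beta(R(c)))\bigr)$, which expands to $\alpha(R(a))\ot\alpha(R(b))c+\alpha(R(a))\ot b\beta(R(c))+a\ot\beta\bigl(R(\alpha(R(b))c+b\beta(R(c)))\bigr)$; the $\beta$-part of the observation above (with $a,b$ replaced by $b,c$) turns the last summand into $a\ot\beta(R(b))\beta(R(c))$, and the result is exactly $(id_A\ot\mu)\circ\mathcal{T}(a\ot b\ot c)$, matched term by term. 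Identity (\ref{wp2}) is symmetric: the left-hand side is $T\bigl((a*b)\ot c\bigr)$, and here it is the $\alpha$-part of the observation that is invoked to rewrite $\alpha\bigl(R(\alpha(R(a))b+a\beta(R(b)))\bigr)$ as $\alpha(R(a))\alpha(R(b))$, again matching $(\mu\ot id_A)\circ\mathcal{T}(a\ot b\ot c)$ term by term. There is no real obstacle beyond bookkeeping; the one step that genuinely uses all the hypotheses is the interplay of the three axioms described above, and the only place where care is needed is to remember that in (\ref{wp1}) the $\beta$-condition is used and in (\ref{wp2}) the $\alpha$-condition.
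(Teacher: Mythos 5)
Your proposal is correct: the term-by-term verification of (\ref{wp1}) and (\ref{wp2}) on simple tensors, using the third axiom to rewrite $R(\alpha(R(b))c+b\beta(R(c)))$ as $R(b)R(c)$ and then the multiplicativity of $\beta$ (respectively $\alpha$ on the other side), is exactly the direct computation the paper invokes. This is the same approach as the paper's proof, which simply states that the result follows by a direct computation.
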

\begin{proof}
Follows by a direct computation.
\end{proof}
\begin{example}
Let $A$ be an associative algebra over a field $k$ and $R:A\rightarrow A$  a so-called 
Reynolds operator (see for instance \cite{uchinotwisting}), that is $R$ satisfies the following condition:
\begin{eqnarray*}
&&R(a)R(b)=R(R(a)b+aR(b)-R(a)R(b)), \;\;\; \forall \;a, b\in A. 
\end{eqnarray*}
If one defines a new multiplication on $A$, by 
\begin{eqnarray*}
&&a*b=R(a)b+aR(b)-R(a)R(b), \;\;\; \forall \;a, b\in A, 
\end{eqnarray*}
then (for instance as a consequence of the theory developped in \cite{uchinoquantum}) $*$
is associative. 

If we define the linear map 
\begin{eqnarray*}
&&T:A\ot A\rightarrow A\ot A, \;\;\;T(a\ot b)=R(a)\ot b+a\ot R(b)-R(a)\ot R(b), \;\;\;\forall \;a, b\in A, 
\end{eqnarray*}
then one can check, by a direct computation, that $T$ 
is a weak pseudotwistor, with weak companion $\mathcal{T}:A\otimes A\otimes A 
\rightarrow A\otimes A\otimes A$, 
$\mathcal{T}(a\ot b\ot c)=R(a)\ot R(b)\ot c+R(a)\ot b\ot R(c)
+a\ot R(b)\ot R(c)-2R(a)\ot R(b)\ot R(c)$,
and the resulting associative multiplication $\mu \circ T$ coincides with $*$. 
\end{example}

We recall from \cite{leroux1} that, if $(A, \mu )$ is an associative unital algebra with unit $1_A$ over a 
field $k$, 
a linear map $P:A\rightarrow A$ is called a {\em TD-operator} if 
\begin{eqnarray}
P(a)P(b)=P(P(a)b+aP(b)-aP(1_A)b), \;\;\;\forall \; a, b\in A. \label{TDop}
\end{eqnarray}
If this is the case, the new multiplication defined on $A$ by $a*b=P(a)b+aP(b)-aP(1_A)b$, 
for all $a, b\in A$, is associative. 
\begin{proposition}
If $P:A\rightarrow A$ is a TD-operator, then the linear map $T:A\ot A\rightarrow A\ot A$, 
$T(a\ot b)=P(a)\ot b+a\ot P(b)-aP(1_A)\ot b$, for all $a, b\in A$, is a weak pseudotwistor with weak companion 
$\mathcal{T}:A\ot A\ot A\rightarrow A\ot A\ot A$, $\mathcal{T}(a\ot b\ot c)=P(a)\ot P(b)\ot c+
P(a)\ot b\ot P(c)+a\ot P(b)\ot P(c)+aP(1_A)\ot bP(1_A)\ot c 
-aP(1_A)\ot P(b)\ot c-aP(1_A)\ot b\ot P(c)-P(a)\ot bP(1_A)\ot c$, and the associative multiplications $*$ 
and $\mu \circ T$ coincide. 
\end{proposition}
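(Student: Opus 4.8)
The plan is to verify the two weak-pseudotwistor axioms \eqref{wp1} and \eqref{wp2} by direct computation, exactly in the spirit of the proof of Proposition~\ref{RotaBaxter}; the fact that $\mu \circ T$ coincides with $*$ is immediate from the formulas, so the only real content is the verification that the given $\mathcal{T}$ works. First I would evaluate $T\circ (id_A\ot (\mu \circ T))$ on a generic element $a\ot b\ot c$. Since $(\mu \circ T)(b\ot c)=P(b)c+bP(c)-bP(1_A)c$, applying $id_A\ot(\mu\circ T)$ gives $a\ot (P(b)c+bP(c)-bP(1_A)c)$, and then applying $T$ to each of the three terms (using $T(x\ot y)=P(x)\ot y+x\ot P(y)-xP(1_A)\ot y$ with $x=a$) produces nine terms: $P(a)\ot P(b)c$, $a\ot P(P(b)c)$, $-aP(1_A)\ot P(b)c$, $P(a)\ot bP(c)$, $a\ot P(bP(c))$, $-aP(1_A)\ot bP(c)$, $-P(a)\ot bP(1_A)c$, $-a\ot P(bP(1_A)c)$, $+aP(1_A)\ot bP(1_A)c$.

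The key manoeuvre is then to collect the three terms in the second tensor slot that have $a$ unmodified in the first slot, namely $a\ot P(P(b)c)+a\ot P(bP(c))-a\ot P(bP(1_A)c)=a\ot P\bigl(P(b)c+bP(c)-bP(1_A)c\bigr)$, and apply the TD-operator identity \eqref{TDop} with the roles of $a,b$ there played by $b,c$: this equals $a\ot P(b)P(c)$. The remaining six terms are exactly $P(a)\ot P(b)\ot c$-type contributions after applying $id_A\ot\mu$ (i.e. they come from multiplying the last two slots of the stated $\mathcal{T}$): indeed $(id_A\ot \mu)\circ \mathcal{T}(a\ot b\ot c)$ expands to $P(a)\ot P(b)c+P(a)\ot bP(c)+a\ot P(b)P(c)+aP(1_A)\ot bP(1_A)c-aP(1_A)\ot P(b)c-aP(1_A)\ot bP(c)-P(a)\ot bP(1_A)c$, which matches the eight collected terms term for term. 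This establishes \eqref{wp1}.

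For \eqref{wp2} I would run the symmetric computation: $(\mu\circ T)(a\ot b)=P(a)b+aP(b)-aP(1_A)b$, apply $(\mu\circ T)\ot id_A$, then $T$ in the first two slots, getting nine terms; the three terms with $c$ unmodified in the last slot combine via \eqref{TDop} (this time with $a,b$ in their original roles) into $P(a)P(b)\ot c$, and the remaining six terms match $(\mu\ot id_A)\circ\mathcal{T}(a\ot b\ot c)$. Here one should double-check that the TD-identity is applied in the correct variable order and that the asymmetric term $aP(1_A)\ot bP(1_A)\ot c$ in $\mathcal{T}$ produces the needed $aP(1_A)bP(1_A)\ot c$ on the left side; this asymmetry between how $P(1_A)$ appears in the first versus the other slots is the one bookkeeping point where a sign or placement error is easy to make.

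The main obstacle is purely organizational rather than conceptual: because $\mathcal{T}$ is not symmetric in its three arguments (the $P(1_A)$-terms are placed so that multiplying slots $1$--$2$ versus slots $2$--$3$ both work), one has to be careful that the single $\mathcal{T}$ simultaneously satisfies both axioms, and the verification of each axiom requires correctly matching seven terms of $(id_A\ot\mu)\circ\mathcal{T}$ (resp. $(\mu\ot id_A)\circ\mathcal{T}$) against the seven surviving terms of the twofold application of $T$ after one use of \eqref{TDop}. Once the nine-term expansions are written out carefully and \eqref{TDop} is invoked in the right variable slot, everything collapses as required, so I would simply present the $T\circ(id_A\ot(\mu\circ T))$ computation in full as in Proposition~\ref{RotaBaxter} and note that the second is ``similar and left to the reader,'' flagging explicitly that the roles of the arguments in \eqref{TDop} are swapped between the two verifications.
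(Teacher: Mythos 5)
Your verification of \eqref{wp1} is correct and matches the paper's strategy (direct expansion, one application of \eqref{TDop} in the slots $(b,c)$), but your treatment of \eqref{wp2} has a genuine gap: it is \emph{not} true that after applying \eqref{TDop} to collapse $P(P(a)b+aP(b)-aP(1_A)b)\ot c$ into $P(a)P(b)\ot c$ ``the remaining six terms match'' those of $(\mu\ot id_A)\circ\mathcal{T}(a\ot b\ot c)$ term by term. Writing $x=P(a)b+aP(b)-aP(1_A)b$, the term $-xP(1_A)\ot c$ of $T(x\ot c)$ contributes $-aP(b)P(1_A)\ot c$, whereas the term $-aP(1_A)\ot P(b)\ot c$ of $\mathcal{T}$ contributes $-aP(1_A)P(b)\ot c$ after multiplying the first two slots. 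These agree only if one also knows that $P(1_A)$ commutes with every $P(b)$, i.e. $P(1_A)P(b)=P(b)P(1_A)$; this identity does follow from \eqref{TDop} (set $b=1_A$ to get $P(a)P(1_A)=P(P(a))$ and $a=1_A$ to get $P(1_A)P(b)=P(P(b))$), and it is precisely the extra ingredient the paper's proof singles out. So the second verification is not merely ``similar and left to the reader'': it needs an identity that plays no role in \eqref{wp1}, and omitting it leaves the stated matching false as written.

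You did sense that the asymmetry of $\mathcal{T}$ hides a subtlety, but you located it at the wrong spot: the term $aP(1_A)\ot bP(1_A)\ot c$ is unproblematic (it yields $aP(1_A)bP(1_A)\ot c$ on both sides), while the real issue is the mismatch $-aP(b)P(1_A)\ot c$ versus $-aP(1_A)P(b)\ot c$ described above. The fix is short: state and prove the lemma $P(1_A)P(a)=P(a)P(1_A)$ from \eqref{TDop} and invoke it in the \eqref{wp2} computation; with that added, your argument coincides with the paper's proof.
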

\begin{proof}
A straightforward computation, by using also the identity $P(1_A)P(a)=P(a)P(1_A)$, for all 
$a\in A$, which follows immediately from (\ref{TDop}). 
\end{proof}

We recall from \cite{leroux2} that, if $(A, \mu )$ is an associative algebra over a 
field $k$, a right (respectively left) Baxter operator on $A$ is a linear map $P:A\rightarrow A$ 
(respectively $Q:A\rightarrow A$) such that $P(a)P(b)=P(P(a)b)$ (respectively $Q(a)Q(b)=Q(aQ(b))$, 
for all $a, b\in A$. If moreover $P$ and $Q$ commute, then, by \cite{leroux2}, Theorem 2.10, the 
new multiplication defined on $A$ by $a*b=P(a)Q(b)$, for all $a, b\in A$, is associative. By a straightforward 
computation, one proves the following result: 
\begin{proposition}
The linear map $T:A\ot A\rightarrow A\ot A$, $T=P\ot Q$, is a weak pseudotwistor, with weak companion 
$\mathcal{T}:A\ot A\ot A\rightarrow A\ot A\ot A$, $\mathcal{T}=P\ot P\circ Q\ot Q$, 
and the associative multiplications $*$ 
and $\mu \circ T$ coincide. 
\end{proposition}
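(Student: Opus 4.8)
The plan is to prove this by the same method as the other propositions in this section: a direct element-wise verification of the two weak pseudotwistor axioms $(\ref{wp1})$ and $(\ref{wp2})$. I read the stated weak companion ``$\mathcal{T}=P\ot P\circ Q\ot Q$'' as the threefold tensor product $\mathcal{T}=P\ot(P\circ Q)\ot Q$, that is, $\mathcal{T}(a\ot b\ot c)=P(a)\ot P(Q(b))\ot Q(c)$ for all $a,b,c\in A$. First I would record the easy facts that $(\mu\circ T)(a\ot b)=P(a)Q(b)=a*b$, so that $\mu\circ T$ really is the multiplication of \cite{leroux2}, and that by hypothesis $P$ and $Q$ commute, hence $P\circ Q=Q\circ P$.

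Next, for $(\ref{wp1})$ I would compute, for $a,b,c\in A$, that $T\circ(id_A\ot(\mu\circ T))(a\ot b\ot c)=T(a\ot P(b)Q(c))=P(a)\ot Q(P(b)Q(c))$. Applying the defining identity of the left Baxter operator $Q$, namely $Q(x)Q(y)=Q(xQ(y))$, with $x=P(b)$ and $y=c$, gives $Q(P(b)Q(c))=Q(P(b))Q(c)$, so the left-hand side equals $P(a)\ot (QP)(b)\,Q(c)$; invoking $QP=PQ$, this is exactly $(id_A\ot\mu)\circ\mathcal{T}(a\ot b\ot c)$.

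Then, for $(\ref{wp2})$ I would compute $T\circ((\mu\circ T)\ot id_A)(a\ot b\ot c)=T(P(a)Q(b)\ot c)=P(P(a)Q(b))\ot Q(c)$, and apply the defining identity of the right Baxter operator $P$, namely $P(x)P(y)=P(P(x)y)$, with $x=a$ and $y=Q(b)$, to obtain $P(P(a)Q(b))=P(a)P(Q(b))=P(a)\,(PQ)(b)$; thus the left-hand side equals $P(a)\,(PQ)(b)\ot Q(c)=(\mu\ot id_A)\circ\mathcal{T}(a\ot b\ot c)$. This shows $T$ is a weak pseudotwistor with weak companion $\mathcal{T}$, and since $\mu\circ T=*$, the asserted associativity is then immediate from Theorem~\ref{main}.

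I expect no real obstacle here; it is a mechanical computation. The only points requiring care are using the \emph{left} Baxter identity for $Q$ in axiom $(\ref{wp1})$ and the \emph{right} Baxter identity for $P$ in axiom $(\ref{wp2})$ (and not the other way around), together with the single use of the commutativity $P\circ Q=Q\circ P$ to reconcile the two computations: axiom $(\ref{wp1})$ by itself would naturally suggest the companion $P\ot(Q\circ P)\ot Q$, while axiom $(\ref{wp2})$ suggests $P\ot(P\circ Q)\ot Q$, and it is exactly the commuting hypothesis that lets a single morphism $\mathcal{T}$ serve for both.
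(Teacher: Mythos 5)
Your proof is correct and coincides with what the paper intends: the paper merely states that the result follows ``by a straightforward computation,'' and your element-wise verification of (\ref{wp1}) via the left Baxter identity for $Q$, of (\ref{wp2}) via the right Baxter identity for $P$, together with one use of $P\circ Q=Q\circ P$, is exactly that computation, with the correct reading $\mathcal{T}=P\ot (P\circ Q)\ot Q$.
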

\section{An equivalence relation}
\setcounter{equation}{0}
\begin{remark}
Let $(\mathcal{C}, \ot )$ be a strict monoidal category and 
$A$ an algebra in $\mathcal{C}$. Then $T=id_{A\ot A}$ is a weak pseudotwistor for $A$, with weak 
companion $\mathcal{T}=id_{A\ot A\ot A}$, and $A^T=A$. 
\end{remark}
\begin{proposition}
Let $(\mathcal{C}, \ot )$ be a strict monoidal category and 
$(A, \mu )$ an algebra in $\mathcal{C}$. 
Let $T:A\ot A\rightarrow A\ot A$ be a weak pseudotwistor for $A$ with weak companion $\mathcal{T}$ 
and $D:A^T\ot A^T\rightarrow A^T\ot A^T$ a weak pseudotwistor for $A^T$ with weak 
companion $\mathcal{D}$. Then $T\circ D$ is a weak pseudotwistor for $A$ with weak companion 
$\mathcal{T}\circ \mathcal{D}$, and $(A^T)^D=A^{T\circ D}$. 
\end{proposition}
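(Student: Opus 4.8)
The plan is to verify the two weak-pseudotwistor axioms (\ref{wp1}) and (\ref{wp2}) for the morphism $T\circ D:A\ot A\rightarrow A\ot A$ with candidate weak companion $\mathcal{T}\circ \mathcal{D}:A\ot A\ot A\rightarrow A\ot A\ot A$, working in the category $\mathcal{C}$ and using only the hypotheses that $T,\mathcal{T}$ are a weak pseudotwistor/weak companion for $(A,\mu)$ and that $D,\mathcal{D}$ are a weak pseudotwistor/weak companion for $A^T=(A,\mu\circ T)$. The crucial point to keep in mind is that the axioms for $D$ are stated \emph{relative to the multiplication $\mu_{A^T}=\mu\circ T$}, so they read
\begin{eqnarray*}
&&D\circ (id_A\ot ((\mu\circ T)\circ D))=(id_A\ot (\mu\circ T))\circ \mathcal{D},\\
&&D\circ (((\mu\circ T)\circ D)\ot id_A)=((\mu\circ T)\ot id_A)\circ \mathcal{D}.
\end{eqnarray*}

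For the first axiom, I would start from $(T\circ D)\circ (id_A\ot ((\mu\circ T\circ D)\circ D))$. Rewriting $\mu\circ T\circ D\circ D$ as $(\mu\circ T)\circ D\circ D$, the inner factor $id_A\ot((\mu\circ T)\circ D\circ D)$ is exactly what appears in the $D$-axiom after noting $\mu_{A^T}\circ D = \mu\circ T\circ D$; applying that axiom replaces $D\circ(id_A\ot((\mu\circ T)\circ D))$ by $(id_A\ot(\mu\circ T))\circ\mathcal{D}$, so we are left with $T\circ(id_A\ot(\mu\circ T))\circ\mathcal{D}$. Now the expression $T\circ(id_A\ot(\mu\circ T))$ is precisely the left-hand side of (\ref{wp1}) for $T$ itself, so it equals $(id_A\ot\mu)\circ\mathcal{T}$, giving $(id_A\ot\mu)\circ\mathcal{T}\circ\mathcal{D}$ as required. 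The second axiom is the mirror image: use (\ref{wp2}) for $D$ relative to $\mu\circ T$, then (\ref{wp2}) for $T$ relative to $\mu$, to obtain $(\mu\ot id_A)\circ\mathcal{T}\circ\mathcal{D}$. This establishes that $T\circ D$ is a weak pseudotwistor for $A$ with weak companion $\mathcal{T}\circ\mathcal{D}$; note this is really just a direct instance of Proposition~\ref{taretwopstw}, since the required conditions (\ref{tarecomp1}) and (\ref{tarecomp2}) on $D$ \emph{are} literally the $D$-axioms with respect to the multiplication $\mu\circ T$ of $A^T$.

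Finally, for the identification $(A^T)^D=A^{T\circ D}$ as algebras in $\mathcal{C}$, I would simply compare multiplications on the underlying object $A$: the multiplication of $(A^T)^D$ is $\mu_{A^T}\circ D=(\mu\circ T)\circ D=\mu\circ(T\circ D)$, which is by definition the multiplication of $A^{T\circ D}$. Since the underlying objects coincide too, the two algebras are equal, not merely isomorphic. I do not anticipate a genuine obstacle here; the only thing that requires care is bookkeeping the shift of base multiplication from $\mu$ to $\mu\circ T$ when invoking the $D$-axioms, and making sure the composite $T\circ D$ acts on $A\ot A$ in the order consistent with that convention.
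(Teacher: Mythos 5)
Your proof is correct and follows essentially the paper's own argument: apply the weak-pseudotwistor axiom for $D$ relative to the multiplication $\mu\circ T$ of $A^T$, then the axiom for $T$ relative to $\mu$, obtaining $(id_A\ot \mu)\circ \mathcal{T}\circ \mathcal{D}$ (and mirror for the second axiom), and note that $\mu_{A^T}\circ D=(\mu\circ T)\circ D=\mu\circ (T\circ D)$, so $(A^T)^D=A^{T\circ D}$ on the nose. Two harmless slips: your opening expression $(T\circ D)\circ (id_A\ot ((\mu\circ T\circ D)\circ D))$ carries a spurious extra $\circ D$ --- the left-hand side of (\ref{wp1}) for $T\circ D$ is $(T\circ D)\circ (id_A\ot (\mu\circ T\circ D))$, which is what your subsequent steps actually manipulate --- and the remark that this is literally an instance of Proposition \ref{taretwopstw} is not exact, since that proposition's statement additionally assumes $D$ is a weak pseudotwistor for $A$ itself (with respect to $\mu$), although its proof uses only (\ref{tarecomp1})--(\ref{tarecomp2}) together with the axioms for $T$ and hence does apply verbatim here.
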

\begin{proof}
We prove (\ref{wp1}), while (\ref{wp2}) is similar and left to the reader: 
\begin{eqnarray*}
T\circ D\circ (id_A\ot (\mu \circ T\circ D))&=&T\circ (D\circ (id_A\ot ((\mu \circ T)\circ D)))\\
&\overset{(\ref{wp1})}{=}&T\circ (id_A\ot (\mu \circ T))\circ \mathcal{D}\\
&\overset{(\ref{wp1})}{=}&(id_A\ot \mu )\circ \mathcal{T}\circ \mathcal{D}.
\end{eqnarray*}
The fact that $(A^T)^D=A^{T\circ D}$ is obvious. 
\end{proof}
\begin{proposition}
Let $(\mathcal{C}, \ot )$ be a strict monoidal category, 
$(A, \mu )$ an algebra in $\mathcal{C}$ and 
$T:A\ot A\rightarrow A\ot A$ a weak pseudotwistor for $A$ with weak companion $\mathcal{T}$, such that 
$T$ and $\mathcal{T}$ are invertible. Then $T^{-1}$ is a weak pseudotwistor for $A^T$ 
with weak companion $\mathcal{T}^{-1}$, and $(A^T)^{T^{-1}}=A$.
\end{proposition}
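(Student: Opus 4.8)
The plan is to verify that the pair $(T^{-1}, \mathcal{T}^{-1})$ satisfies the two axioms (\ref{wp1}) and (\ref{wp2}) defining a weak pseudotwistor, but now for the algebra $A^T$ in place of $A$. By Theorem~\ref{main} the object $A$ equipped with $\mu _T:=\mu \circ T$ is indeed an algebra, so this makes sense; note also that $\mathcal{T}^{-1}$ is a well-defined endomorphism of $A\ot A\ot A=A^T\ot A^T\ot A^T$. The key observation is the identity $\mu _T\circ T^{-1}=\mu \circ T\circ T^{-1}=\mu $, which lets us rewrite the axioms we must check: the first axiom for $(T^{-1},\mathcal{T}^{-1})$ on $A^T$ reads $T^{-1}\circ (id_A\ot (\mu _T\circ T^{-1}))=(id_A\ot \mu _T)\circ \mathcal{T}^{-1}$, and since its left-hand side equals $T^{-1}\circ (id_A\ot \mu )$, it suffices to prove $T^{-1}\circ (id_A\ot \mu )=(id_A\ot (\mu \circ T))\circ \mathcal{T}^{-1}$; similarly the second axiom reduces to $T^{-1}\circ (\mu \ot id_A)=((\mu \circ T)\ot id_A)\circ \mathcal{T}^{-1}$.

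Next I would derive these two identities from the hypothesis that $T$ is a weak pseudotwistor with weak companion $\mathcal{T}$. Starting from (\ref{wp1}), I compose on the left with $T^{-1}$ and on the right with $\mathcal{T}^{-1}$: cancelling $T^{-1}\circ T$ on the left (using invertibility of $T$) and $\mathcal{T}\circ \mathcal{T}^{-1}$ on the right (using invertibility of $\mathcal{T}$) yields precisely $(id_A\ot (\mu \circ T))\circ \mathcal{T}^{-1}=T^{-1}\circ (id_A\ot \mu )$. The same manipulation applied to (\ref{wp2}) gives $((\mu \circ T)\ot id_A)\circ \mathcal{T}^{-1}=T^{-1}\circ (\mu \ot id_A)$. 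Together with the reduction of the previous paragraph, this shows that $T^{-1}$ is a weak pseudotwistor for $A^T$ with weak companion $\mathcal{T}^{-1}$.

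Finally, the equality $(A^T)^{T^{-1}}=A$ is immediate, since the multiplication of $(A^T)^{T^{-1}}$ is by definition $\mu _T\circ T^{-1}=\mu \circ T\circ T^{-1}=\mu $, the multiplication of $A$, and the underlying object is $A$ in both cases. I do not expect any real obstacle here; the only point needing care is to invoke the invertibility of $\mathcal{T}$ (and not just of $T$) when cancelling $\mathcal{T}$ on the right in the two computations above, which is exactly why both invertibility hypotheses appear in the statement.
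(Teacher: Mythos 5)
Your proof is correct and follows essentially the same route as the paper: both reduce the weak pseudotwistor axioms for $(T^{-1}, \mathcal{T}^{-1})$ on $A^T$ via $\mu\circ T\circ T^{-1}=\mu$ and observe that the resulting identities are just (\ref{wp1}) and (\ref{wp2}) composed with $T^{-1}$ on the left and $\mathcal{T}^{-1}$ on the right. The only difference is that you spell out the cancellation and the second axiom explicitly, which the paper declares "obviously equivalent" and leaves to the reader.
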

\begin{proof}
We prove (\ref{wp1}) and leave (\ref{wp2}) to the reader. We need to prove that 
\begin{eqnarray*}
&&T^{-1}\circ (id_A\ot ((\mu \circ T)\circ T^{-1}))=(id_A\ot (\mu \circ T))\circ \mathcal{T}^{-1}.
\end{eqnarray*}
This is obviously equivalent to 
\begin{eqnarray*}
&&T\circ (id_A\otimes (\mu \circ T))=(id_A\otimes \mu )\circ \mathcal{T}.
\end{eqnarray*}
The fact that $(A^T)^{T^{-1}}=A$ is obvious. 
\end{proof}
\begin{proposition}
Let $(\mathcal{C}, \ot )$ be a strict monoidal category, 
$(A, \mu _A)$ and $(B, \mu _B)$ two algebras in $\mathcal{C}$, $f:A\rightarrow B$ an algebra 
isomorphism and  
$T:A\ot A\rightarrow A\ot A$ a weak pseudotwistor for $A$ with weak companion $\mathcal{T}$. Then 
$D:=(f\ot f)\circ T\circ (f^{-1}\ot f^{-1})$ is a weak pseudotwistor for $B$ with weak companion 
$\mathcal{D}:=(f\ot f\ot f)\circ \mathcal{T}\circ (f^{-1}\ot f^{-1}\ot f^{-1})$, and $f$ is also an 
algebra isomorphism from $A^T$ to $B^D$. 
\end{proposition}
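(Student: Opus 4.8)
The plan is to derive the weak-pseudotwistor axioms (\ref{wp1})--(\ref{wp2}) for the pair $(D,\mathcal{D})$ on $B$ by conjugating the corresponding axioms for $(T,\mathcal{T})$ on $A$ with the isomorphism $f$, and then to observe directly that $f$ intertwines $\mu_A\circ T$ with $\mu_B\circ D$. The only structural fact used is that $f$ is an algebra isomorphism; I would record it in the two equivalent forms $\mu_B\circ(f\ot f)=f\circ\mu_A$ and $\mu_A\circ(f^{-1}\ot f^{-1})=f^{-1}\circ\mu_B$, together with the versions obtained by tensoring with $id_A$ or $id_B$ in one slot (e.g. $(f\ot f)\circ(id_A\ot\mu_A)=(id_B\ot\mu_B)\circ(f\ot f\ot f)$), which are precisely the moves that let $f$ slide past a multiplication.

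To prove (\ref{wp1}) for $(D,\mathcal{D})$, I would start from $D\circ(id_B\ot(\mu_B\circ D))$, substitute $D=(f\ot f)\circ T\circ(f^{-1}\ot f^{-1})$ throughout, and use $\mu_B\circ(f\ot f)=f\circ\mu_A$ to replace the middle $\mu_B$; the interior $f^{-1}\circ f$ factors then cancel, collapsing the left-hand side to $(f\ot f)\circ\bigl(T\circ(id_A\ot(\mu_A\circ T))\bigr)\circ(f^{-1}\ot f^{-1}\ot f^{-1})$. Applying (\ref{wp1}) for $T$ replaces the bracketed term by $(id_A\ot\mu_A)\circ\mathcal{T}$, and a final use of $(f\ot f)\circ(id_A\ot\mu_A)=(id_B\ot\mu_B)\circ(f\ot f\ot f)$ rewrites the result as $(id_B\ot\mu_B)\circ(f\ot f\ot f)\circ\mathcal{T}\circ(f^{-1}\ot f^{-1}\ot f^{-1})=(id_B\ot\mu_B)\circ\mathcal{D}$, which is exactly (\ref{wp1}) for $(D,\mathcal{D})$. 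Axiom (\ref{wp2}) is the mirror-image computation, with $id_A\ot(-)$ replaced by $(-)\ot id_A$ everywhere and (\ref{wp2}) for $T$ used in place of (\ref{wp1}).

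For the last assertion, recall that the multiplication of $B^D$ is $\mu_B\circ D$ and that of $A^T$ is $\mu_A\circ T$. Then $(\mu_B\circ D)\circ(f\ot f)=\mu_B\circ(f\ot f)\circ T\circ(f^{-1}\ot f^{-1})\circ(f\ot f)$, and since $(f^{-1}\ot f^{-1})\circ(f\ot f)=id_{A\ot A}$ this equals $\mu_B\circ(f\ot f)\circ T=f\circ\mu_A\circ T=f\circ(\mu_A\circ T)$, so $f$ is a morphism of algebras $A^T\to B^D$. Being invertible in $\mathcal{C}$, it is an isomorphism, and $f^{-1}$ is then automatically an algebra morphism $B^D\to A^T$ (equivalently, conjugating back by $f^{-1}$ sends $(D,\mathcal{D})$ to $(T,\mathcal{T})$).

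I do not expect a genuine obstacle: the argument is pure bookkeeping of conjugation by $f$. The one point that needs care is keeping track of which tensor slot each contraction acts on, i.e. invoking the correct tensored form of $\mu_B\circ(f\ot f)=f\circ\mu_A$ when sliding $f$ past the middle multiplication in the $id_A\ot\mu_A$ version versus the $\mu_A\ot id_A$ version. Note also that invertibility of $T$ and $\mathcal{T}$ plays no role here — conjugation by an isomorphism is always available — although if $T$ and $\mathcal{T}$ happen to be invertible then so are $D$ and $\mathcal{D}$, which is what makes this proposition compatible with the twist-equivalence relation.
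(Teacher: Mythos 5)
Your proposal is correct and follows essentially the same route as the paper: both prove (\ref{wp1}) for $(D,\mathcal{D})$ by conjugation with $f$, cancelling the interior $f^{-1}\circ f$ factors, applying (\ref{wp1}) for $(T,\mathcal{T})$, and sliding $f$ past $\mu_A$ via $\mu_B\circ (f\ot f)=f\circ \mu_A$, with (\ref{wp2}) handled as the mirror image. Your final verification that $(\mu_B\circ D)\circ(f\ot f)=f\circ(\mu_A\circ T)$ is exactly the identity the paper uses to conclude that $f$ is an algebra isomorphism from $A^T$ to $B^D$.
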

\begin{proof}
We prove (\ref{wp1}) for $D$ and leave (\ref{wp2}) to the reader:
\begin{eqnarray*}
D\circ (id_B\ot (\mu _B\circ D))&=&D\circ (id_B\ot (\mu _B\circ (f\ot f)\circ T\circ (f^{-1}\ot f^{-1})))\\
&=&D\circ (id_B\ot (f\circ \mu _A\circ T\circ (f^{-1}\ot f^{-1})))\\
&=&(f\ot f)\circ T\circ (f^{-1}\ot f^{-1})\circ (id_B\ot (f\circ \mu _A\circ T\circ (f^{-1}\ot f^{-1})))\\
&=&(f\ot f)\circ T\circ ((f^{-1}\circ id_B)\ot (f^{-1}\circ f\circ \mu _A\circ T\circ (f^{-1}\ot f^{-1})))\\
&=&(f\ot f)\circ T\circ (f^{-1}\ot (\mu _A\circ T\circ (f^{-1}\ot f^{-1})))\\
&=&(f\ot f)\circ T\circ ((id_A\circ f^{-1})\ot (\mu _A\circ T\circ (f^{-1}\ot f^{-1})))\\
&=&(f\ot f)\circ T\circ (id_A\ot (\mu _A\circ T))\circ (f^{-1}\ot f^{-1}\ot f^{-1})\\
&\overset{(\ref{wp1})}{=}&(f\ot f)\circ (id_A\otimes \mu _A)\circ \mathcal{T}\circ 
(f^{-1}\ot f^{-1}\ot f^{-1})\\
&=&((f\circ id_A)\ot (f\circ \mu _A))\circ \mathcal{T}\circ 
(f^{-1}\ot f^{-1}\ot f^{-1})\\
&=&(f\ot (\mu _B\circ (f\ot f)))\circ \mathcal{T}\circ 
(f^{-1}\ot f^{-1}\ot f^{-1})\\
&=&((id_B\circ f)\ot (\mu _B\circ (f\ot f)))\circ \mathcal{T}\circ 
(f^{-1}\ot f^{-1}\ot f^{-1})\\
&=&(id_B\ot \mu _B)\circ (f\ot f\ot f)\circ \mathcal{T}\circ 
(f^{-1}\ot f^{-1}\ot f^{-1})\\
&=&(id_B\ot \mu _B)\circ \mathcal{D}, \;\;\;q.e.d.
\end{eqnarray*}
The fact that f is an algebra morphism from $A^T$ to $B^D$ follows from the fact that 
$f\circ \mu _A\circ T=\mu _B\circ (f\ot f)\circ T=\mu _B\circ D\circ (f\ot f)$. 
\end{proof}
\begin{definition}
Let $(\mathcal{C}, \ot )$ be a strict monoidal category and  
$(A, \mu _A)$, $(B, \mu _B)$ two algebras in $\mathcal{C}$. We will say that $A$ and $B$ are 
{\em twist equivalent}, and write $A\equiv _tB$, if there exists an invertible weak pseudotwistor 
$T$ for $A$, with invertible weak companion $\mathcal{T}$, such that $A^T$ and $B$ are isomorphic as algebras. 
\end{definition}
\begin{remark}
In view of Remark \ref{echival}, we have $A\equiv _tB$ if and only if there exists an invertible pseudotwistor 
$T$ for $A$, with invertible companions $\tilde{T}_1$ and $\tilde{T}_2$, such that $A^T$ and $B$ are isomorphic as algebras, if and only if there exists an invertible $R$-matrix  
$T$ for $A$, with invertible companions $\overline{T}_1$ and $\overline{T}_2$, 
such that $A^T$ and $B$ are isomorphic as algebras.
\end{remark} 

Obviously, two isomorphic algebras are twist equivalent. 

As a consequence of the above results, we obtain:
\begin{proposition}
$\equiv _t$ is an equivalence relation. 
\end{proposition}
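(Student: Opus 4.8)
The plan is to verify the three defining properties of an equivalence relation, in each case reducing to the three preceding propositions of this section --- on composing weak pseudotwistors, on inverting an invertible weak pseudotwistor (with invertible weak companion), and on transporting a weak pseudotwistor along an algebra isomorphism --- together with the preliminary remark that $id_{A\ot A}$ is a weak pseudotwistor for $A$ with weak companion $id_{A\ot A\ot A}$ and $A^{id_{A\ot A}}=A$. Reflexivity is then immediate: $id_{A\ot A}$ is an invertible weak pseudotwistor for $A$ with invertible weak companion $id_{A\ot A\ot A}$, and $A^{id_{A\ot A}}=A$ is isomorphic to $A$, so $A\equiv _tA$.

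For symmetry, suppose $A\equiv _tB$, witnessed by an invertible weak pseudotwistor $T$ for $A$ with invertible weak companion $\mathcal{T}$ and an algebra isomorphism $g:A^T\ra B$. First I would apply the proposition on inverting weak pseudotwistors: $T^{-1}$ is a weak pseudotwistor for $A^T$ with weak companion $\mathcal{T}^{-1}$, both invertible, and $(A^T)^{T^{-1}}=A$. Then I would transport this along the algebra isomorphism $g$: the morphism $D:=(g\ot g)\circ T^{-1}\circ (g^{-1}\ot g^{-1})$ is a weak pseudotwistor for $B$ with weak companion $\mathcal{D}:=(g\ot g\ot g)\circ \mathcal{T}^{-1}\circ (g^{-1}\ot g^{-1}\ot g^{-1})$, and $g$ is an algebra isomorphism from $(A^T)^{T^{-1}}=A$ to $B^D$. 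Since $D$ and $\mathcal{D}$ are composites of invertible morphisms, they are invertible, and $B^D\cong A$; hence $B\equiv _tA$.

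For transitivity, suppose $A\equiv _tB$ and $B\equiv _tC$, with witnessing data $(T,\mathcal{T},g:A^T\ra B)$ as above and an invertible weak pseudotwistor $S$ for $B$ with invertible weak companion $\mathcal{S}$ and algebra isomorphism $h:B^S\ra C$. Since $g^{-1}:B\ra A^T$ is an algebra isomorphism, transporting $S$ along $g^{-1}$ yields a weak pseudotwistor $D:=(g^{-1}\ot g^{-1})\circ S\circ (g\ot g)$ for $A^T$ with invertible weak companion $\mathcal{D}:=(g^{-1}\ot g^{-1}\ot g^{-1})\circ \mathcal{S}\circ (g\ot g\ot g)$, and $g^{-1}$ is an algebra isomorphism from $B^S$ to $(A^T)^D$. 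Now $D$ is a weak pseudotwistor for $A^T$, so by the composition proposition $T\circ D$ is a weak pseudotwistor for $A$ with weak companion $\mathcal{T}\circ \mathcal{D}$ (both invertible) and $(A^T)^D=A^{T\circ D}$. Finally $h\circ g:A^{T\circ D}=(A^T)^D\ra C$ is a composite of algebra isomorphisms, so $A^{T\circ D}\cong C$, giving $A\equiv _tC$.

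I do not expect a genuine obstacle: the substantive work is entirely in the three preceding propositions. The only points requiring care are bookkeeping of the direction of the algebra isomorphisms --- in particular transporting $S$ along $g^{-1}$ rather than $g$ in the transitivity step --- and checking that invertibility of both the weak pseudotwistor and its weak companion is preserved under composition and under conjugation by $f\ot f$ (resp.\ $f\ot f\ot f$), which is clear since these operations produce composites of isomorphisms.
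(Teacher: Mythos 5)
Your proof is correct and takes exactly the route the paper intends: the paper states this proposition as an immediate consequence of the preceding remark (on $id_{A\ot A}$) and the three propositions on composition, inversion and transport of weak pseudotwistors, without writing out the details. Your write-up supplies precisely that bookkeeping (including the correct direction of transport along $g^{-1}$ in the transitivity step), so there is nothing to add.
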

\begin{example}
In the setting of Example \ref{cocycles}, if $\sigma $ (respectively $\tau $) is a convolution 
invertible left (respectively right) 2-cocycle, then $_{\sigma }H\equiv _tH$ and $ H_{\tau }\equiv _tH$. 
\end{example}
\begin{example}
Let $(A, \mu _A)$ and $(B, \mu _B)$ be two associative algebras over a field $k$ and 
$R:B\ot A\rightarrow A\ot B$ a twisting map, with Sweedler-type notation $R(b\ot a)=a_R\ot b_R$, 
for $a\in A$, $b\in B$. We can consider the twisted tensor product $A\ot _RB$ 
(cf. \cite{cap}, \cite{vandaele}), which is the associative algebra structure on the linear space $A\ot B$ given 
by the multiplication $(a\ot b)(a'\ot b')=aa'_R\ot b_Rb'$, for $a, a'\in A$, $b, b'\in B$. Define the linear map 
\begin{eqnarray*}
&&T:(A\ot B)\ot (A\ot B)\rightarrow (A\ot B)\ot (A\ot B), \\
&&T((a\ot b)\ot (a'\ot b'))=(a\ot b_R)\ot (a'_R\ot b'). 
\end{eqnarray*}
By \cite{lpvo}, $T$ is a so-called twistor for the associative algebra $A\ot B$, in particular it is a weak 
pseudotwistor with weak companion 
\begin{eqnarray*}
&&\mathcal{T}:(A\ot B)\ot (A\ot B)\ot (A\ot B)\rightarrow (A\ot B)\ot (A\ot B)\ot (A\ot B), \\
&&\mathcal{T}((a\ot b)\ot (a'\ot b')\ot (a''\ot b''))=(a\ot (b_R)_{\mathcal{R}})\ot (a'_R\ot b'_r)\ot 
((a''_r)_{\mathcal{R}}\ot b''), 
\end{eqnarray*}
where $r$ and $\mathcal{R}$ are two more copies of $R$; moreover, we have that $A\ot _RB=(A\ot B)^T$. 

Assume now that $R$ is a bijective map. Then obviously $T$ and $\mathcal{T}$ are also bijective, 
hence $A\ot _RB\equiv _tA\ot B$. 
\end{example}

{\bf Note added.} We used the term ''Rota-Baxter type operator'' in an informal way, to designate 
an operator that is ''similar'' to a Rota-Baxter operator. Professor Li Guo kindly draw out attention to the 
paper \cite{RBTO}, where this term was introduced as a rigorous concept and moreover a conjectural 
list of possible Rota-Baxter type operators was proposed. 
\begin{center}
ACKNOWLEDGEMENTS
\end{center}
We would like to thank the referee for some useful suggestions. 

\end{document}